\documentclass[11pt]{article}
\usepackage{latexsym,amsmath,url,epsfig}
\usepackage{textcomp}
\usepackage{multirow}
\usepackage{amsfonts,euscript}
\usepackage{amsmath}
\usepackage{amssymb}
\usepackage{amsfonts}
\usepackage{amsthm}
\usepackage{mathrsfs}
\usepackage[all]{xy}
\usepackage{epstopdf}
\usepackage{rotating}
\usepackage{appendix}
\usepackage{multirow,rotating}
\usepackage{url}
\usepackage{algorithm}
\usepackage{algorithmicx}
\usepackage{algpseudocode}
\usepackage{setspace}
\usepackage{listings}
\usepackage{xcolor}
\usepackage[normalem]{ulem}
\usepackage[showdeletions]{color-edits}
\usepackage{tikz}
\usepackage{graphics}
\usepackage[breaklinks=true]{hyperref}
\usepackage[english]{babel}
\usepackage{datetime}
\usepackage{booktabs}
\usepackage{rotfloat}
\usepackage{framed}
\usepackage[authoryear]{natbib}
\usepackage{graphicx}
\usepackage{caption}
\usepackage{subcaption} 

\usetikzlibrary{chains,shapes,decorations,arrows,calc,arrows.meta,fit,positioning}
\tikzset{three sided/.style={
draw=none,
append after command={
[shorten <= -0.5\pgflinewidth]
([shift={(-0.5\pgflinewidth,-0.5\pgflinewidth)}]\tikzlastnode.north east)
edge([shift={( 0.5\pgflinewidth,-0.5\pgflinewidth)}]\tikzlastnode.north west) 
([shift={( 0.5\pgflinewidth,-0.5\pgflinewidth)}]\tikzlastnode.north east)
edge([shift={( 0.5\pgflinewidth,-.95\pgflinewidth)}]\tikzlastnode.south east)  
([shift={( -0.5\pgflinewidth,-0.5\pgflinewidth)}]\tikzlastnode.south west)
edge([shift={(0.5\pgflinewidth,-0.5\pgflinewidth)}]\tikzlastnode.south east)
}
}
}
\tikzset{three sided2/.style={
draw=none,
append after command={
[shorten <= -0.5\pgflinewidth]
([shift={( -0.5\pgflinewidth,-0.5\pgflinewidth)}]\tikzlastnode.north west)
edge([shift={( -0.5\pgflinewidth,-.95\pgflinewidth)}]\tikzlastnode.south west)  
([shift={( 0.5\pgflinewidth,-0.5\pgflinewidth)}]\tikzlastnode.north east)
edge([shift={( 0.5\pgflinewidth,-.95\pgflinewidth)}]\tikzlastnode.south east)  
([shift={( -0.5\pgflinewidth,-0.5\pgflinewidth)}]\tikzlastnode.south west)
edge([shift={(0.5\pgflinewidth,-0.5\pgflinewidth)}]\tikzlastnode.south east)
}
}
}
\tikzset{three sided3/.style={
draw=none,
append after command={
[shorten <= -0.5\pgflinewidth]
([shift={(-0.5\pgflinewidth,-0.5\pgflinewidth)}]\tikzlastnode.north east)
edge([shift={( 0.5\pgflinewidth,-0.5\pgflinewidth)}]\tikzlastnode.north west) 
([shift={( -0.5\pgflinewidth,-0.5\pgflinewidth)}]\tikzlastnode.north west)
edge([shift={( -0.5\pgflinewidth,-.95\pgflinewidth)}]\tikzlastnode.south west)  
([shift={( 0.5\pgflinewidth,-0.5\pgflinewidth)}]\tikzlastnode.north east)
edge([shift={( 0.5\pgflinewidth,-.95\pgflinewidth)}]\tikzlastnode.south east)  
}
}
}

\definecolor{revisionorange}{rgb}{1.0,0.45,0.0}

\addauthor[Jin]{jg}{revisionorange}
\graphicspath{{pic/}}
\newtheorem{lemma}{Lemma}

\theoremstyle{definition}

\newcommand{\abs}[1]{\lvert#1\rvert}

\algdef{SE}[DOWHILE]{Do}{doWhile}{\algorithmicdo}[1]{\algorithmicwhile\ #1}

\allowdisplaybreaks

\begin{document}

\title{Diffusion-Based Policies for Dynamic Control of Stochastic Processing Networks}

\author{
Bar{\i}\c{s} Ata\thanks{Booth School of Business, University of Chicago,
\texttt{baris.ata@chicagobooth.edu}}
\and
Jin Guang\thanks{Booth School of Business, University of Chicago,
\texttt{jin.guang@chicagobooth.edu}}
\and
J. Michael Harrison\thanks{Graduate School of Business, Stanford University,
\texttt{mike.harrison@stanford.edu}}
\and
Nian Si\thanks{Dept of IEDA, Hong Kong University of Science and Technology,
\texttt{niansi@ust.hk}\\ Our code is available at \url{https://github.com/J-Guang/SPN-dynamic-control}}
}

\date{}
\maketitle
\begin{abstract}
We consider a processing network model with $m$ job classes or buffers, exogenous input flows into some classes, $n$ processing activities, and $p$ servers. Each activity is either a specified server processing jobs of a specified class, or a fictional input server delivering jobs of a specified class; jobs change class in Markovian fashion after completing service. A standard multiclass queueing network, with its one-to-one correspondence between job classes and activities, is a special case, but our general model allows two or more ways to process a given class, and some or all input flows may be turned away at the system manager's discretion.

Costs are linear: a holding cost per time unit for each class $i$ job in the system, and a rejection penalty for each class $i$ arrival denied access $(i=1,\ldots,m)$. The system manager makes input control, job routing, and order-of-service decisions to minimize expected discounted costs over an infinite horizon.

We formulate an approximating Brownian control problem (BCP) whose state space is the $m$-dimensional nonnegative orthant; control is a drift vector chosen from a bounded polyhedral set, based on dynamic state observations. Using recently developed computational methods, the BCP can be solved numerically in dimensions up to at least $m=50$, and we explain how the numerical solution is translated into an implementable control policy for the queueing system of original interest.

Previous work on heavy traffic diffusion approximations suggests that this policy is nearly optimal in the heavy traffic parameter regime, and our numerical examples support that conjecture. We also discuss its advantage over an alternative approach, featured in our previous work, where the BCP is replaced by a lower-dimensional "equivalent workload formulation" that is computationally efficient but difficult to interpret in the network of original interest.
\end{abstract}

\section{Introduction}\label{sec:introduction}

This paper continues the work reported previously in \citet{ata2023drift} and \citet{ata2024singular}. The first of those papers developed a computational method for the solution of certain stochastic control problems, and the second one adapted that method for the solution of a different but related class of problems. To be specific, the first paper deals with drift control of reflected Brownian motion (RBM), while the second one deals with singular control of RBM. 

The motivation for that earlier work, and for the research reported here, comes from applications in queueing theory. We continue an established tradition of diffusion approximations, also called Brownian approximations, for queueing network models. Following recent usage in the stochastic modeling literature, we use the term ``stochastic processing network" (SPN) to identify a broad class of stochastic system models, and reserve the term ``queueing network" for models within that class having a particular special structure (see Section \ref{sec:general}).

Our focus is on Brownian approximations for SPN control problems. The associated literature began with the analysis of a specific example (the ``criss-cross example" introduced in Section \ref{sec:general} of this paper) by \cite{harrison1989scheduling} and \cite{martins1996heavy}, and with a general framework developed by \cite{harrison1988brownian}. Our goal is to derive approximately optimal solutions for such network control problems in the ``heavy traffic" parameter regime. Both of our earlier papers on computational methods featured queueing-theoretic examples, including Brownian approximations for SPN control problems that had been derived and discussed in earlier work. However, neither paper provided a systematic treatment of the process by which the computed solution for an approximating Brownian control problem is translated into an implementable policy for the SPN of original interest.  

The purpose of this paper is to provide such a systematic treatment. We begin by describing a general network model (see Section \ref{sec:general}), after which a Brownian approximation is developed for the associated network control problem (Sections \ref{sec:Centering} through \ref{sec:drift1}). The state space of the Brownian approximation is the $m$-dimensional nonnegative orthant, where $m$ is the number of job classes in the SPN. In Section \ref{sec:policy} we lay out a general method for translating the computed solution of the Brownian control problem into an implementable policy for control of the original SPN. The method is illustrated using numerical examples in Section \ref{sec:Tests}, and those computational results are discussed further in Section \ref{sec:Discussion}. There are also several appendixes that provide background calculations and computational details.
 
In previous work on Brownian approximations for SPN control, the notion of an \textit{equivalent workload formulation} (EWF) has played a prominent role. That concept was developed in its most general form by \cite{harrison1997dynamic}, who observed the following: the natural diffusion approximation for a network control problem with $m$ job classes is a Brownian control problem (BCP) in the $m$-dimensional nonnegative orthant, but in most cases that initial BCP is logically equivalent to another one (the EWF) whose dimension $d$ is smaller than $m$, often much smaller. 

By focusing on the $d$-dimensional EWF, rather than the initial BCP, one can reduce computational effort, often quite substantially, but there is a catch: no general method has been proposed to translate the EWF solution into an implementable control policy for the network of original interest, and the difficulty of that translation increases with the dimension reduction effected by the EWF. That is, greater computational gain is typically accompanied by greater difficulty implementing the computed solution. See Section \ref{sec:ThreeStation} for discussion of an example in which that difficulty arises.

\section{Literature Review}\label{sec:literature}

Our work builds on five interrelated streams of literature,
which are reviewed in turn below. These streams cover a broad range of topics, from model formulation at one extreme to computational methods at the other.

\textbf{Dynamic control of stochastic processing networks in heavy
traffic.} The use of
Brownian approximations for purposes of control, as opposed to descriptive performance analysis, began with
\citet{harrison1988brownian}. That paper formulated a general Brownian
control problem (BCP) that approximates a multiclass queueing network
with dynamic scheduling capability; we follow that formulation closely in Sections \ref{sec:general} through \ref{sec:BrownianAnalog} of this paper. See also \citet{harrison2000brownian} and \citet{harrison2003broader} for further generalizations. Early applications of Harrison's Brownian framework include the following: the
criss-cross example studied by \citet{harrison1989scheduling}, which serves
as our first example in this paper; the closed-network analysis by
\citet{harrison1990scheduling}; and the analyses of
\citet{wein1991brownian, wein1992scheduling}, which featured discretionary
routing and controllable inputs, respectively. \citet{laws1992resource} and \citet{kellylaws1993dynamic} also studied
Brownian approximations of networks with a dynamic routing capability,
identifying the phenomenon of resource pooling that figures prominently
in later work.

\textbf{Equivalent workload formulations.} As recounted in Sections \ref{sec:introduction} and \ref{sec:EWF} below,
\citet{harrison1997dynamic} developed in general form the notion of an
equivalent workload formulation (EWF), whereby an $m$-dimensional
initial BCP is reduced to an equivalent singular control
problem of lower dimension $d$.  \citet{harrison2000brownian} identified a
canonical representation of the workload matrix $M$ (see also the
correction, \citealp{harrison2006correction}), and extended that development to the more
general class of stochastic processing networks considered in this paper. 

An important special case arises when the EWF is
one-dimensional, a situation described by the phrase ``complete
resource pooling'': the many processing resources of the network then
merge, in the heavy traffic limit, into a single super-resource whose
workload summarizes system status; see \citet{laws1992resource},
\citet{kellylaws1993dynamic} and \citet{harrison-lopez1999} for early
treatments and \citet{ata-kumar2005} for a general formulation. For
parallel-server systems, \citet{pesic2016dynamic} extended the EWF
machinery, identifying an associated graphical structure and decoupled workload
matrix; their three-server example is the second of the test problems
treated in Sections \ref{sec:ThreeExamples} and \ref{sec:Tests} of this paper. As
emphasized in Sections \ref{sec:introduction} and \ref{sec:Discussion}, the EWF reduces
computational effort, often very substantially, but no general method
has been proposed for translating an EWF solution into an implementable
control policy, and that difficulty motivates the alternative path
pursued in this paper.

\textbf{From Brownian solutions to implementable policies.} Broadly speaking, the policies proposed to date fall into two
categories, namely, continuous-review and discrete-review. Under a continuous-review policy, the system manager makes decisions continually as the system state evolves, 
typically according to a
dynamic priority or threshold rule distilled from the Brownian
solution. For the criss-cross network, \citet{harrison1989scheduling}
proposed a threshold policy of this kind, and \citet{martins1996heavy}
proved its asymptotic optimality. Subsequent work by
\citet{budhiraja2005large}, \citet{budhiraja2008optimal} and
\citet{budhiraja2017construction} extended the rigorous theory to
further parameter regimes of the criss-cross example, in the last case
constructing an asymptotically optimal policy directly from a free
boundary problem. For parallel-server systems,
\citet{bellwilliams2001dynamic, bellwilliams2005dynamic} proved
asymptotic optimality of continuous-review threshold policies derived
from the one-dimensional EWF, and \citet{mandelbaumstolyar2004scheduling}
established the remarkable fact that the generalized $c\mu$ rule, which
requires neither thresholds nor advance knowledge of demand rates, is
asymptotically optimal for convex holding costs. \citet{maglaras2003continuous} proposed a general family of continuous-review tracking policies. A common feature of the
threshold-type policies in this stream is the presence of safety-stock
parameters, whose values must be specified with care. In practice, one
often chooses them by simulation experimentation to obtain good performance; the
discussion of the criss-cross example in Section \ref{sec:ThreeExamples}
illustrates the role of such parameters.

Discrete-review policies, in contrast, review system status at discrete
points in time, planning activity levels over the next review period by
solving a linear program whose data derive from the Brownian
or fluid analysis. \citet{harrison1996bigstep} proposed the general
BIGSTEP framework, and \citet{harrison1998discrete-review} used it to
prove asymptotic optimality of a discrete-review policy for a
parallel-server system. \citet{ata-kumar2005} proved asymptotic
optimality of discrete-review policies for open processing networks
satisfying the complete resource pooling condition, allowing the full
generality of alternative processing modes featured in our model
formulation; also see \citet{ata-olsen2013}.

The policy proposed in Section \ref{sec:policy} of this paper belongs to the
continuous-review family, but it differs from the threshold-type
policies reviewed above in two respects. First, it is computed from the
gradient $\nabla V(\cdot)$ of the modified BCP's value function,
evaluated continually at the observed (scaled) state, rather than from
a small number of thresholds distilled in advance; no safety-stock
parameters need to be specified or tuned. Second, because the modified
BCP retains the $m$-dimensional state descriptor of the original
network, its solution prescribes capacity allocations directly, so the
interpretation step is immediate even for networks in which
interpretation of an EWF solution is challenging.

\textbf{Drift-rate control of Brownian system models.} The modified BCP formulated in Section \ref{sec:drift1} is a drift-rate
control problem: the controller continually chooses a drift vector from
a bounded polyhedral action space, the state process being a reflected
diffusion in the nonnegative orthant. One-dimensional problems of this
general type have a long history, beginning with the bounded-velocity
``follower'' problems solved in closed form by \citet{benevs1980some}.
\citet{ataharrisonshepp2005drift} solved a drift-rate control problem
for one-dimensional RBM, motivated by heavy traffic approximations of queueing systems
with adjustable service rates. Subsequent work in this vein includes applications such as dynamic
pricing and lead-time quotation \citep{ccelik2008dynamic}, joint buffer
sizing and rate control with impatient customers
\citep{ghoshweerasinghe2007optimal, ghoshweerasinghe2010optimal}, drift
control with changeover costs motivated by build-to-order manufacturing
\citep{ormecimatoglu2011drift}, volunteer staffing of gleaning
operations \citep{atalee2019dynamic}, and dynamic pricing, outsourcing
and scheduling for a make-to-stock queue \citep{ata2023approximate}.
The problems solved in those papers are all one-dimensional, admitting
analytical or quasi-analytical solutions of the associated Bellman
equations. In contrast, the drift control problems that arise as
modified BCPs in this paper have state dimension $m$, the number of job
classes, which is 8 in our three-station example and can easily reach
several dozen in applications; problems of that scale demand a
different, computational line of attack, to which we now turn.

\textbf{Computational methods for Brownian control problems.} Classical numerical methods for stochastic control are grid-based; see for example, the
Markov chain approximation method of \citet{kushner1991numerical} and the finite-element method of 
\citet{kumar2004numerical}. Such methods are reliable in low dimensions but
suffer from the curse of dimensionality: as a practical matter they are
limited to state dimensions of roughly three to four, which is precisely
why the EWF, with its reduced dimension $d$, has been the preferred
vehicle for computation in the antecedent literature.

A recent and rapidly growing literature uses deep neural networks to
solve high-dimensional partial differential equations and stochastic
control problems, thereby circumventing spatial discretization
altogether. One influential line of work exploits the probabilistic
representation of semilinear parabolic PDEs by backward stochastic
differential equations (BSDEs), a theory initiated by
\citet{pardouxpeng1990adapted} and developed by
\citet{elkaroui1997backward}; see \citet{zhang2017backward} for a
modern treatment. In the ``deep BSDE'' method of
\citet{ehanjentzen2017deep} and \citet{han2018solving}, the unknown
gradient process of the BSDE is parametrized by neural networks, whose
weights are trained by stochastic gradient descent on simulated paths;
see \citet{han-jentzen-e2025} for a survey of this literature and other related developments. In
subsequent applications of this general approach,
\citet{atakasikaralar2025dynamic} solved scheduling problems for
multiclass queues with many servers in dimensions up to 500, and
\citet{ataxu2025dynamic} treated dynamic matching systems.

The computational method employed in this paper is essentially the one developed by \citet{ata2023drift}, who adapted deep BSDE technology to drift control of reflected Brownian motion in an orthant. The novel element of their method is its treatment of boundary reflection terms in the loss function; their method computes both the value function $V(\cdot)$ and its
gradient $\nabla V(\cdot)$, the latter being exactly what is needed for the
policy that we advocate in Section \ref{sec:policy} of this paper. The deep BSDE method of \citet{ata2023drift} is computationally
feasible for state dimensions of 50 or more, and a companion paper
\citep{ata2024singular} treats singular control problems, including the
EWFs referred to above, by approximating them with drift control problems.

\section{General network model with discounted linear costs}
\label{sec:general}

The general model that we consider is one with \(m\) job classes, \(n\) activities, and \(p\) servers. We imagine jobs of each class $i=1,\ldots,m$ as occupying a similarly numbered buffer, regardless of whether the job is being served or is waiting for service. Thus the terms ``class" and ``buffer" can and will be used interchangeably. Each activity \(j = 1, \ldots, n\) consists of an associated server \(s(j) \in \{1, \ldots, p\}\) either processing (that is, serving) jobs of some specified class or else generating arrivals into one or more classes; it is called a \textit{processing activity} in the former case and an \textit{input activity} in the latter case. In the former case, we denote by $b(j) \in \{1, \ldots, m\}$ the class or buffer that is served in activity $j$, and in the latter case we set $b(j)=0$ by convention. For simplicity we assume that input activities create individual arrivals, as opposed to batch arrivals, and similarly, that processing activities serve one job at a time. The treatment to follow can easily be generalized to allow batching, but doing so adds complexity that distracts from the main flow of ideas.

A control policy takes the form of a non-decreasing process \(\{T_j(t), t \geq 0\}\), where \(T_j(t)\) is interpreted as the cumulative amount of capacity that server \(s(j)\) devotes to activity \(j\) over the interval \([0, t]\). As a matter of convention, we ascribe to each server \(k = 1, \ldots, p\) a capacity of 1, so the \(n\)-dimensional process of cumulative capacity allocations \(\{T(t), t \geq 0\}\) must satisfy: 
\begin{equation}
    A \left[ T(t) - T(s) \right] \leq e (t - s) \quad \text{for } 0 \leq s \leq t < \infty, \label{eq:capacity-constraint}
\end{equation}
where \(e\) is the \(p\)-vector of ones, and \(A\) is the \(p \times n\) capacity consumption matrix
\begin{equation}
    A_{kj} =
\begin{cases} 
1 & \text{if } k = s(j), \\
0 & \text{otherwise}.
\end{cases}
\label{eq:defineA}
\end{equation}
Of course, a control policy \(T\) must satisfy other constraints as well; see below for elaboration. For future purposes, it will be convenient to define the $p$-dimensional cumulative idleness process
\begin{equation}
 I(t)=et-AT(t),  \quad t \geq 0,
\label{eq:defineIdleness}
\end{equation}
interpreting $I_k(t)$ as follows for $k=1,\ldots,p$. It is the cumulative idleness of server $k$ over the time interval $[0,t]$, or equivalently, the cumulative amount of server $k$ capacity that goes unused up to time $t$.
\newline

\noindent \textbf{Stochastic primitives and the queue length process.} We take as primitive a set of \(m\)-dimensional, integer-valued stochastic processes \(\{F^j(t), t \geq 0\}\), \(j = 1, \ldots, n\), with the following interpretation: each component process \(F_i^j(\cdot)\) is either monotone increasing or monotone decreasing; in the former case, \(F_i^j(t)\) represents the number of jobs \textit{removed} from buffer \(i\) by the first \( t \) units of capacity that server \(s(j)\) devotes to activity \(j\); and in the latter case, it represents the number of jobs \textit{deposited} in that buffer by the indicated activity. The final primitive stochastic element of our general model is an \(m\)-dimensional exogenous arrival process \(\{E(t), t \geq 0\}\), with the following interpretation: \(E_i(t)\) is the cumulative number of exogenous (that is, uncontrollable) arrivals into buffer \(i\) over the interval \([0,t]\). Given the intended meanings explained in this paragraph, we assume the following \textit{gross structure} for the primitive processes $E(\cdot),F^1(\cdot),\ldots,F^n(\cdot)$: each of them is integer-valued; each component of $E(\cdot)$ is non-decreasing; at most one component of $F^j(\cdot)$ is non-decreasing, while all other components are non-increasing $(j=1,\ldots,n)$; and $E(0)=F^1(0)=\ldots=F^n(0)=0$. These assumptions are satisfied by what \cite{bramson-williams2003} call \textit{unitary networks}; see Definition 3.1 of that paper.

Denoting by \(Q_i(t)\) the content of buffer \(i\) at time \(t\), we define an \(m\)-dimensional queue length process \(\{Q(t), t \geq 0\}\) in the obvious way. Then we have the following basic representation:
\begin{equation}
    Q(t) = Q(0) + E(t) - \sum_{j=1}^n F^j \left( T_j(t) \right) \geq 0, \quad t \geq 0, \label{eq:queueing-dynamics}
\end{equation}
where $Q(0) \in \mathbb{Z}_+^m$ is given as problem data. Two elements of our general model specification are combined in this display: first, the process $Q(\cdot)$ is defined by the equality in \eqref{eq:queueing-dynamics}, and second, the policy $T(\cdot)$ must be chosen so that $Q(\cdot)$ remains nonnegative at all times.

The primitive stochastic processes $E(\cdot),F^1(\cdot),\ldots,F^n(\cdot)$ are mutually independent by assumption, and there are given a vector $\lambda\in \mathbb{R}_+^m$ and an \(m \times n\) matrix \(R\) with columns $R^1,\ldots,R^n$ such that
\begin{equation}
\mathbb{E} \left[ E(t) \right] \sim \lambda t \quad \text{and} \quad \mathbb{E} \left[ F^j(t) \right] \sim R^j t \quad \text{for } j = 1, \ldots, n
\label{eq:first_order}
\end{equation}
as \(t \to \infty\). Thus, \(\lambda_i\) is interpreted as the long-run average exogenous arrival rate into buffer \(i\), and \(R^j_i\) as the long-run average rate at which activity \(j\) removes jobs from buffer \(i\) (if it is positive) or adds jobs to that buffer (if it is negative). Because each activity serves jobs from at most one buffer, \textit{the matrix $R$ has at most one positive element in each column}. Also, to avoid trivial complications, it is assumed that \textit{each row of $R$ contains at least one positive element}, which means that there exists at least one activity that serves jobs of any given class. For future reference, we define the centered processes
\begin{equation}
\hat{E}(t) = E(t) - \lambda t \quad \text{and} \quad \hat{F}^j(t) = F^j(t) - R^j t \quad \text{for } j = 1, \ldots, n \text{   and   } \ \ t \geq 0.
\label{eq:centered_stochastic}
\end{equation}

As a complement to \eqref{eq:first_order} we are given \(m \times m\) covariance matrices \(\Gamma^0, \Gamma^1, \ldots, \Gamma^n \) such that 
\begin{equation}
\text{Cov} \left[ E(t) \right] \sim \Gamma^0 \, t \quad \text{and} \quad \text{Cov} \left[ F^j(t) \right] \sim \Gamma^j \, t \quad \text{for } j = 1, \ldots,n
\label{eq:second_order}
\end{equation}
as \(t \to \infty\). Finally, we assume that $E(\cdot),F^1(\cdot),\ldots,F^n(\cdot)$ each satisfies a functional central limit theorem with first-moment data and second-moment data given by \eqref{eq:first_order} and \eqref{eq:second_order}, respectively. That is, we assume that 
\begin{equation}
\frac{1}{\sqrt r}\hat{E}(r\cdot)\Rightarrow \text{BM}(0,\Gamma^0) \quad \text{and} \quad \frac{1}{\sqrt r}\hat{F}^j(r\cdot)\Rightarrow \text{BM}(0,\Gamma^j) \quad \text{for } j = 1, \ldots,n
\label{eq:FCLT}
\end{equation}
as \(r\to\infty\), where \(\Rightarrow\) denotes weak convergence in $D^m[0,\infty)$ and $\text{BM}(0,\Gamma)$ is $m$-dimensional Brownian motion with drift zero, covariance matrix \(\Gamma\), and zero initial state. 
\newline

\noindent \textbf{Additional standard assumptions.} The following additional assumptions on the primitive processes $E(\cdot),F^1(\cdot),\ldots,F^n(\cdot)$ are standard in queueing theory. First, the exogenous arrival process for each class $i=1,\ldots,m$ is a \textit{renewal} process: we denote by $\lambda_i$  the average exogenous arrival rate into class $i$ (that is, the reciprocal of the mean inter-arrival time); also, for each class $i$ that has a non-null exogenous input process (that is, $\lambda_i>0$), we assume that inter-arrival times have a finite variance $\eta_i^2>0$, and we set $\eta_i=0$ for all other classes.

Second, service times for activity $j$ are assumed to be independent and identically distributed (i.i.d.), with mean $\mu_j^{-1}>0$ and variance $\sigma_j^2>0$ ($j=1,\ldots,n$). Finally, recalling that $b(j)$ is the buffer served by activity $j$, we assume the following version of \textit{Markovian job routing}: after a job of class $b(j)$ completes a type $j$ service, the probability that it will next become a class $i$ job is $P_{ij}$, independent of all previous history, where $P$ is a nonnegative $m \times n$ matrix with column sums $ \leq 1$.

These additional assumptions justify or imply the functional central limit theorems \eqref{eq:FCLT}, and they also allow one to compute the asymptotic mean vectors and covariance matrices $\lambda,\Gamma^0,R^1,\Gamma^1,\cdots,R^n,\Gamma^n$ in terms of more basic system parameters. Those calculations will be reviewed in Appendix \ref{Additional}. The resulting formulas provide a satisfying element of concreteness, but the added structure is \textit{not} essential for the analytical method developed here, which depends only on the independence of the primitive processes $E(\cdot),F^1(\cdot),\ldots,F^n(\cdot)$, the gross structure of those processes that was assumed earlier, and validity of the functional central limit theorems \eqref{eq:FCLT}.
\newline

\noindent \textbf{Admissible policies and the system manager's objective.} Taking the sample paths of \(E(\cdot)\) and \(F^j(\cdot)\) to be right-continuous by convention, we require that a control policy \(T(\cdot)\) be non-decreasing with $T(0)=0$, and that it be non-anticipating with respect to the queue length process \(Q(\cdot)\) defined via \eqref{eq:queueing-dynamics}. Another restriction on a policy \(T(\cdot)\) is the following: if $b(j)=i$ (that is, activity $j$ serves buffer $i$), then $T_j(\cdot)$ increases only when $Q_i(\cdot)>0$. This ensures that each component of \(Q(\cdot)\) remains nonnegative at all times. Finally, readers are reminded of the capacity constraints \eqref{eq:capacity-constraint} that an admissible policy must satisfy, which imply that each component of \(T(\cdot)\) is absolutely continuous with uniformly bounded slope.

Turning now to system economics, we take as given an $m$-vector of \textit{holding cost rates} $h \geq 0$ and a $p$-vector of \textit{idleness cost rates} $c \geq 0$. The system manager chooses a control policy to
\begin{equation}
 \text{minimize  } \mathbb{E} \left \{ \int_{0}^{\infty} e^{- \rho t} [ \, h \cdot Q(t) \, dt + c\cdot dI(t) \, ] \right \},
\label{eq:objective}
\end{equation}
where $\rho>0$ is an interest rate for discounting, also given as data, and $\cdot$ denotes inner product as usual. As we shall see later, the ``idleness cost rates" provide a way of representing penalties that may be incurred for denying access to arriving jobs of various classes.

\subsection{Open multiclass queueing networks}
\label{sec:MCQN} An important special case of our general model is the open queueing network described in Section 2 of \citet{harrison1988brownian}. There are $p$ single-server stations that process jobs of $m$ different classes, and there is a one-to-one correspondence between job classes and activities. That is, $n=m$ and activity $j \in \{1,\ldots,m\}$ consists of server $s(j)$ providing services for class $j$ jobs. It is usual in the literature of queueing theory to impose the additional, ``standard" assumptions spelled out immediately above, and we shall do so here.

Readers should note that, for a conventional queueing network model with $n=m$, the matrix $P$ that was introduced above is the \textit{transpose} of the usual Markov transition matrix: in our notational system, the probability that a class $j$ job next becomes a class $i$ job after completing service is $P_{ij}$, and the probability that a class $j$ job leaves the system after completing service is $1-\sum_{i=1}^{m}P_{ij}$.

When we speak of an \textit{open} network model, that means the following: $\lambda_i>0$ for at least one class $i$, and $P$ has spectral radius $<1$ (thus its transpose is a \textit{transient} Markov matrix), which ensures that all jobs eventually leave the system. 

A simple example of an open queueing network is the criss-cross example pictured in Figure \ref{fig:criss-cross}, which was introduced by \cite{harrison1989scheduling} and has been further analyzed in a number of later papers, including \cite{martins1996heavy} and \citet{ata2024singular}. Here one has $p=2$ single-server stations processing jobs of $m=3$ classes. Jobs arrive to classes $1$ and $2$ according to independent Poisson processes with rates $\lambda_1$ and $\lambda_2$, respectively, and both of those classes are served at station $1$. After completing service, class $1$ jobs leave the system, whereas class $2$ jobs make a transition to class $3$ and are then served at station $2$. Service times for each class $i$ are exponentially distributed with mean service rate  $\mu_i>0$, and for maximum simplicity, we assume that any service can be interrupted at any time and resumed later without any efficiency loss. 

\begin{figure}[ht]
\centering
\begin{tikzpicture}[start chain=going right,>=latex,node distance=1pt]
		\node[three sided,minimum width=1.5cm,minimum height = 1cm,on chain] (wa) {2};

		\node[draw,circle,on chain,minimum size=1cm] (se) {1};
		\node[three sided2,minimum width=1cm,minimum height = 1.5cm,on chain]  (wa4)[above =of 
		se]  {1};
		
		\node[three sided,minimum width=1.5cm,minimum height = 1cm,on chain]  (wa5)[right =of 
		se, xshift=1cm]  {3};
		\node[draw,circle,on chain,minimum size=1cm] (se2) {2};

		\draw[->] (se) edge node[above] {} (wa5.west);
		\draw[->] (se2.east) -- +(30pt,0);
		\draw[->] (se.south) -- +(0,-30pt);
		
		\draw[<-] (wa.west) -- +(-20pt,0) node[left] {$\lambda_2$};
		\draw[<-] (wa4.north) -- +(0,20pt) node[above] {$\lambda_1$};
		
	\end{tikzpicture}
\caption{The criss-cross network.}
\label{fig:criss-cross}
\end{figure}

Inputs to classes $1$ and $2$ are uncontrollable, and there is a positive holding cost rate for jobs of each class. A system manager decides, at each point in time, which job class to serve at each station, including the possibility of idleness for either server. Because we assume a positive holding cost rate for each class, and because only class $3$ jobs are served at station $2$, an optimal policy will obviously keep server $2$ busy serving class $3$ whenever buffer $3$ is non-empty. Thus the problem boils down to deciding, at each point in time, whether to serve a class $1$ job at station $1$, serve a class $2$ job there, or idle server $1$. 

The arrival rates and mean service rates that we assume for the criss-cross model are the same ones used in the previous studies referred to above, namely,
\begin{equation}
(\lambda_1,\lambda_2, \lambda_3)=( 1,0.95, 0) \quad \text{and} \quad (\mu_1,\mu_2,\mu_3)=(2,2,1).
\label{eq:criss-crossData}
\end{equation}
Analysis of this example will be resumed later in Sections \ref{sec:CrissCross} and \ref{sec:criss-cross}, where the main development assumes the holding cost structure $(h_1,h_2, h_3)=( 1.5,1,1)$; \cite{harrison1989scheduling} assumed $(h_1,h_2, h_3)=( 1,1,1)$ in their original treatment of the criss-cross example, and that simpler case will be briefly considered as well. Finally, we take the idleness penalty rates to be $(c_1,c_2)=( 0,0)$. 

\subsection{Alternative processing modes}
\label{sec:AltModes}
Consider now a model in which all activities are processing activities, as opposed to input activities, but now $n>m$. That is, the number of activities available to the system manager is strictly larger than the number of buffers or job classes. In such a model there necessarily exists at least one class that can be served via two or more activities, which gives rise to the term ``alternative processing modes." In this setting, the term ``class $i$ service" would be ambiguous, so we say ``activity $j$ service" instead.

\begin{figure}[ht]
   \centering
\begin{tikzpicture}[node distance=1cm and 2cm]
    \node[three sided2,minimum width=1cm,minimum height = 1.5cm] (1) {1};
    \node[three sided2,minimum width=1cm,minimum height = 1.5cm, right=of 1] (2) {2};
    \node[three sided2,minimum width=1cm,minimum height = 1.5cm, right=of 2] (3) {3};

    \node[draw,minimum size=1cm,circle, below=of 1] (c1) {1};
    \node[draw,circle,minimum size=1cm, below=of 2] (c2) {2};
    \node[draw,circle,minimum size=1cm, below=of 3] (c3) {3};

    \draw[->] (1) edge node[left] {$\mu_1$} (c1);
    \draw[->] (2) edge node[below right] {$\mu_3$} (c2);
    \draw[->] (3) edge node[left] {$\mu_4$} (c3);
    \draw[->] (1) edge node[left] {$\mu_2$\phantom{1}} (c2);
    \draw[->] (3) edge node[near start,above] {$\mu_5$} (c1);
    \draw[->] (c1.south) -- +(0,-30pt);
    \draw[->] (c2.south) -- +(0,-30pt);
    \draw[->] (c3.south) -- +(0,-30pt);
\draw[<-] (1.north) -- +(0,20pt) node[above] {$\lambda_1$};
  \draw[<-] (2.north) -- +(0,20pt) node[above] {$\lambda_2$};
  \draw[<-] (3.north) -- +(0,20pt) node[above] {$\lambda_3$};
\end{tikzpicture}
\caption{Parallel-server example of \cite{pesic2016dynamic}}
\label{fig:pesic}
\end{figure}

Figure \ref{fig:pesic} pictures a system with three servers and three job classes, each class having its own exogenous input process, and the following additional special structure: jobs of each class require just one service before they exit the system, which is expressed mathematically by the property $P_{ij}=0$ for all $i$ and $j$. Models having this structure are called \textit{parallel-server systems} in the literature. In this particular example, introduced by \cite{pesic2016dynamic}, two of the three job classes can be processed by either of two servers, as shown in the figure, so there are a total of five processing activities. As in the previous criss-cross example (Figure \ref{fig:criss-cross}), we assume Poisson input processes and exponential service time distributions, but this is done only to simplify exposition; the method developed here is distribution free, depending only on first-moment and second-moment data of the original queueing model. The arrival rates and mean service rates that we set for the Pesic-Williams example are as follows:
\begin{eqnarray}
    (\lambda_1,\lambda_2, \lambda_3) = (1.95, 0.95, 0.95) \ \text{ and } \  \mu =(1,2,2,1,1). 
\label{eqn-pesic-williams-arrival-service-rates}
\end{eqnarray} 
Analysis of this example will be continued in Sections \ref{sec:PesicWilliams} and \ref{sec:parallel-server}, where two different holding cost structures are considered. For one of them the EWF admits a pathwise optimal solution, but for the other one it does not. We focus primarily on the case without a pathwise optimal solution, reviewing just briefly the results of \citet{pesic2016dynamic} for the simpler case. Finally, as in our criss-cross example, inputs are not controllable in the Pesic-Williams example, and the idleness penalty rates are all zero ($c_1=c_2=c_3=0).$

\subsection{Dynamic routing and input control}
\label{sec:InputControl}

A subject of paramount importance in real-world applications of queueing theory is dynamic control of inputs to a system, usually called ``congestion control" in digital communications. To show how that capability can be accommodated within the framework of our general model (see Section \ref{sec:general}), we consider the three-station example pictured in Figure \ref{fig:three-station}, which was introduced by \citet{harrison1996bigstep} and further studied by \citet{harrison1997dynamic}.

\begin{figure}[!ht]
\centering
\includegraphics[width=6in]{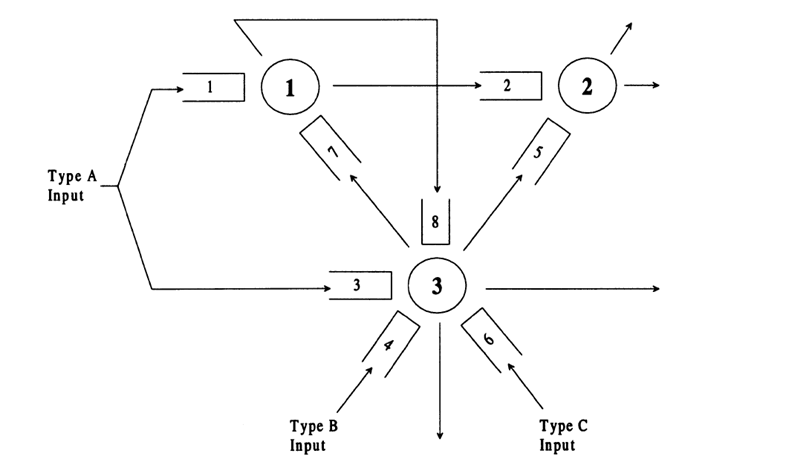}
\caption{Three-station example (copied from 
\citet{harrison1996bigstep}).}
\label{fig:three-station}
\end{figure}

In this example there are jobs of three different types A, B and C that arrive from the outside world according to independent Poisson processes; their average arrival rates are 0.5, 0.25 and 0.25, respectively. However, these are \textit{not} what we have called ``exogenous" arrival processes, because arrivals of each type can be rejected at the discretion of the system controller; see below for more about that.

Jobs of types B and C, if they are not rejected upon arrival, follow the predetermined routes shown in Figure  \ref{fig:three-station}, but for those of type A there are two processing modes available, and the routing decision must be made irrevocably at the moment of arrival. If a type A job is directed to the lower route, it requires just a single service at station 3, but if it is directed to the upper route, then two services are required at stations 1 and 2 in that order.

As shown in Figure  \ref{fig:three-station}, we define a different job class for each combination of job type and stage of completion, making eight classes in total. Each station serves multiple classes, so the system manager must make dynamic sequencing decisions (that is, order-of-service decisions), but also makes dynamic routing decisions for type A jobs. Finally, as stated above, the system manager can reject new jobs of any type upon arrival, incurring a type-dependent penalty for each rejected arrival. Those penalties must be weighed against class-dependent holding costs that provide a positive motivation for exercising input control when queues are large. As in our previous criss-cross and parallel-server examples, we assume independent Poisson arrival processes for the three job types in our three-station example, and exponential service time distributions for its eight job classes; as noted previously, those specific distributional assumptions simplify exposition but are inessential.

If dynamic routing and dynamic input control are eliminated from consideration, then there are eight processing activities in our three-station example, corresponding to service of the eight job classes identified in Figure  \ref{fig:three-station}, each conducted by one of three servers. To extend that formulation, it is conceptually easiest to speak in terms of three fictional ``input servers" numbered 4, 5 and 6, who generate arrivals of types A, B, and C, respectively. We associate with the input servers four 
input activities, as follows:
\newline

\indent \indent \indent  activity 9  = creation of class 1 jobs (by server 4),

\indent \indent \indent activity 10 = creation of class 3 jobs (by server 4),

\indent \indent \indent activity 11 = creation of class 4 jobs (by server 5),

\indent \indent \indent activity 12 = creation of class 6 jobs (by server 6).
\newline

Hereafter, when we say that server 4 is engaged in activity 9, this is understood to mean that the type A input process is turned on and any resulting arrival will be routed to buffer 1, and similarly for activities 10, 11 and 12. In the obvious way, idleness of an input server is interpreted to mean that the corresponding input process is turned off, or equivalently, that new arrivals of that type are being rejected.

For concreteness, we allow the input servers to work at less than full capacity, and allow server 4 to divide its time between activities 9 and 10, if doing so is deemed desirable. Such actions can be interpreted as randomized acceptance of new arrivals and randomized routing of type A arrivals, respectively. Similarly, servers 1, 2 and 3 are allowed to divide their time among activities available to them, processing several job classes simultaneously, provided that the total rate of effort allocation does not exceed the server's capacity of 1.

To repeat, all inputs are controllable in this model formulation. That is, external inputs to buffers 1, 3, 4 and 6 are represented as outputs of the four ``input activities" defined above. Thus, the exogenous input process $E$ in our general formulation is null for this particular model, so the associated vector of exogenous arrival rates is $\lambda=0$. Analysis of this example will be continued in Sections \ref{sec:ThreeStation} and \ref{sec:three-station}, where we assume specific values for mean service rates, holding cost rates and rejection penalties.

\section{Centering and scaling in the heavy traffic regime}
\label{sec:Centering}

Returning now to the general model laid out in Section \ref{sec:general}, we take as given an $n$-vector of \textit{nominal activity rates} $\beta$ such that
\begin{equation}
\beta \geq 0 \quad \text{and} \quad A \beta = e,
\label{eq:nominal_rates}
\end{equation}
and impose the following \textit{heavy traffic assumption}: there exists a large scaling parameter \(r > 0\) such that
\begin{equation}
 \text{all components of} \quad \zeta := \sqrt{r} (\lambda - R\beta) \quad \text{are of moderate absolute value.}
\label{eq:heavy_traffic}
\end{equation}
The equality in \eqref{eq:nominal_rates} says that the nominal activity rates precisely consume the full capacity of each server. In the special case of \(\zeta = 0\), we say that the system is \textit{perfectly balanced} under the nominal control policy \(T^*(t) = \beta t, t \geq 0\), and because our scaling parameter \(r\) is large by hypothesis, one may interpret \eqref{eq:heavy_traffic} as saying that the system under study is \textit{approximately balanced} under nominal activity rates. That is, \eqref{eq:heavy_traffic} says that under the nominal control policy 
$T^*(\cdot)$, the total rate of flow into each buffer is approximately equal to the total outflow rate.
\newline

\noindent \textbf{Deviation controls.} Our analytical method is based on an implicit assumption that the system manager knows in advance, via crude first-order analyses of the kind illustrated in  Appendix \ref{Nominal}, that average activity rates close to \(\beta_1, \dots, \beta_n\) are nearly optimal over long time spans. To put that in different words, one may say that the vector $\beta$ constitutes a \textit{rough-cut operating plan}, from which only relatively minor deviations are expected. Thus we are led to re-express the system manager's policy as an \(n\)-vector of cumulative deviation controls \(Y(\cdot)\) as follows:
\begin{equation}
Y(t) = \beta t - T(t), \quad t \geq 0.
\label{eq:deviation_control}
\end{equation}
To repeat, the nominal activity rates \(\beta_1, \dots, \beta_n\) are treated here as given data, which is equivalent to saying that the preliminary analysis required for their determination (see Appendix \ref{Nominal} for examples) lies outside the scope of our analysis.
\newline

\noindent \textbf{Re-expressing the main system equation.} Using  \eqref{eq:centered_stochastic} and \eqref{eq:nominal_rates}, one sees that the basic relationship \eqref{eq:queueing-dynamics} can be rewritten in terms of centered processes and deviation controls as follows:
\begin{equation}
Q(t) = Q(0) + \hat{X}(t) + (\lambda - R \beta) t + R Y(t)  \geq 0, \quad t \geq 0,
\label{eq:queue_length}
\end{equation}
where
\begin{equation}
\hat{X}(t) = \hat{E}(t) - \sum_{j=1}^n \hat{F}^j(T_j(t)), \quad t \geq 0.
\label{eq:centered_queue}
\end{equation}
Finally, let us define the scaled processes
\begin{equation}
\tilde{Q}(t) = \frac{1}{\sqrt{r}} Q(rt), \quad \tilde{X}(t) = \frac{1}{\sqrt{r}} \hat{X}(rt), \quad \tilde{Y}(t) = \frac{1}{\sqrt{r}} Y(rt), \quad t \geq 0.
\label{eq:rescaled_processes}
\end{equation}
Combining \eqref{eq:queue_length} and \eqref{eq:rescaled_processes} with the definition \eqref{eq:heavy_traffic} of \(\zeta\) gives us the following centered-and-scaled version of the basic system relationship \eqref{eq:queueing-dynamics}:
\begin{equation}
\tilde{Q}(t) = \tilde{Q}(0) + \tilde{X}(t) + \zeta t + R \tilde{Y}(t)  \geq 0, \quad t \geq 0.
\label{eq:centered_scaled_system}
\end{equation}

\noindent \textbf{Re-expressing capacity constraints.} To re-express the capacity constraints \eqref{eq:capacity-constraint} in terms of centered-and-scaled processes, we first observe the following: from the definition \eqref{eq:defineIdleness} of the cumulative idleness process $I(\cdot)$, plus \eqref{eq:nominal_rates}, \eqref{eq:deviation_control} and \eqref{eq:rescaled_processes}, one has that
\begin{equation}
\tilde{I}(t) = A \tilde{Y}(t), \text{  where  } \tilde{I}(t) = \frac{1}{\sqrt{r}} I(rt),\quad t \geq 0.
\label{eq:rescaled_capacity}
\end{equation}
Thus the capacity constraints \eqref{eq:capacity-constraint} are equivalently expressed as follows:
\begin{equation}
\tilde{I}(\cdot) \text{ is non-decreasing with } \tilde{I}(0) = 0.
\label{eq:capacity_again}
\end{equation}

\noindent \textbf{Accounting for nonbasic activities.} An activity $j \in \{1,\ldots,n\}$ will be called \textit{basic} if ${\beta_j>0}$ and \textit{nonbasic} if ${\beta_j=0}$. Nonbasic activities have no role in the system manager's rough-cut operating plan, but they may be used sparingly in exceptional circumstances. As will be seen in Appendix \ref{Nominal}, only one of the three illustrative networks discussed earlier in Section \ref{sec:general} involves nonbasic activities, that being the parallel-server example pictured in Figure \ref{fig:pesic}. There is just one nonbasic activity in that example (see Appendix \ref{Nominal} for specifics), that being a combination of server and job class that is ``inefficient" if one considers only mean service rates but may be useful when the corresponding server would otherwise be forced to idle.

For a nonbasic activity $j$, the definition \eqref{eq:deviation_control} simply reduces to $Y_j(\cdot)=-T_j(\cdot)$. Thus, when a network model involves nonbasic activities, the monotonicity constraints \eqref{eq:capacity_again} must be augmented to include the requirement that $-Y_j(\cdot)$ be non-decreasing for nonbasic $j$. Using the approach adopted in \citet{harrison2003broader}, this can be accomplished as follows. First, let the basic activities be numbered $1,\ldots,b$, where $1 \leq b \leq n$, and then partition the $p \times n$ capacity consumption matrix $A$ as $A=(B,N)$, where $B$ is $p \times b$ and $N$ is $p \times (n-b)$. Next, define 
\begin{equation}
   K=
  \left[ {\begin{array}{cc}
   B & N \\
   0 & -I \\
  \end{array} } \right] \text{  where $I$ is the $(n-b) \times (n-b)$ identity matrix},
\label{eq:define_K}
\end{equation}
and 
\begin{equation}
\tilde{U}(t) = K \tilde{Y}(t), \quad t \geq 0.
\label{eq:define_U-tilde}
\end{equation}
Then the augmented version of \eqref{eq:capacity_again} that we require is the following:
\begin{equation}
\tilde{U}(\cdot) \text{ is non-decreasing with } \tilde{U}(0) = 0.
\label{eq:Monotonicity}
\end{equation}

\noindent \textbf{Re-expressing the objective.} As a final element of the ``heavy traffic" parameter regime under study here, we assume that the interest rate $\rho$ appearing in \eqref{eq:objective} is small, which means that long time spans are relevant in choosing a control. More precisely, we assume that
\begin{equation}
\gamma := r \rho > 0 \quad \text{is of moderate size,}
\label{eq:smallrho}
\end{equation}
where $r$ is the large scaling parameter introduced in Section \ref{sec:Centering}. Using this definition of $\gamma$, plus the definition of $\tilde{Q}(\cdot)$ in \eqref{eq:rescaled_processes}, the definition of $\tilde{Y}(\cdot)$ via \eqref{eq:deviation_control} and \eqref{eq:rescaled_processes}, and the definition of $\tilde{U}(\cdot)$ via \eqref{eq:rescaled_capacity} and \eqref{eq:define_U-tilde}, we can make the change of variable $s=t/r$ in \eqref{eq:objective} to re-express our system manager's objective as follows: 
\begin{equation}
 \text{minimize  } \mathbb{E} \left \{ \int_{0}^{\infty} e^{- \gamma s} [ \, \tilde{h} \cdot \tilde{Q}(s) \,  ds + \tilde{c} \cdot d\tilde{U}(s) \, ]  \right \},
\label{eq:scaledobjective}
\end{equation}
where
\begin{equation}
 \tilde{h} = (r^{3/2}h_1,\ldots,r^{3/2}h_m) \text{ \quad and \quad } \tilde{c} = (r^{1/2}c_1,\ldots,r^{1/2}c_p, 0,\ldots,0).
\label{eq:scaledcoeffs}
\end{equation}
In the definition \eqref{eq:scaledcoeffs} of $\tilde h$ and $\tilde{c}$, we have rescaled the $m$-vector $h$ and the $p$-vector $c$ by factors of $r^{3/2}$ and $r^{1/2}$, respectively, but also appended to $c$ a final set of $n-b$ zeros, reflecting the fact that there are no penalties associated with the introduction of nonbasic activities.

The re-scaling of space and time that we have used in this section can be interpreted as a change in units of measurement: we have changed by a factor of $\sqrt r$ the units in which buffer contents and flow quantities are measured, and simultaneously changed by a factor of $r$ the units in which time is measured. The different powers of $r$ that appear in the expressions for $\tilde h$ and $\tilde c$ reflect the fact that $h$ is expressed in units like dollars per job per minute, whereas $c$ is expressed in units like dollars per minute of idle time. 
\newline

\noindent \textbf{Summary.} To summarize, the system manager chooses a scaled deviation control \(\tilde{Y}(\cdot)\) to  minimize the objective \eqref{eq:scaledobjective}, where $\tilde{Q}(\cdot)$ and $\tilde{U}(\cdot)$ are defined by \eqref{eq:centered_scaled_system} and \eqref{eq:define_U-tilde}, respectively. Of course, \(\tilde{Y}(\cdot)\) must be suitably adapted, and it must satisfy the monotonicity constraints embodied in \eqref{eq:Monotonicity}.

\section{Brownian analog of the general control problem}
\label{sec:BrownianAnalog}
In this section we develop a natural diffusion approximation, or Brownian approximation, or Brownian analog, for the general control problem that was originally described in Section \ref{sec:general}, and was then re-expressed in scaled terms via \eqref{eq:centered_scaled_system}, \eqref{eq:Monotonicity} and \eqref{eq:scaledobjective}. In this reformulation we denote by $Z(\cdot), X(\cdot), Y(\cdot)$ and $U(\cdot)$ the analogs of processes $\tilde{Q}(\cdot), \tilde{X}(\cdot), \tilde{Y}(\cdot)$ and $\tilde{U}(\cdot)$ that either have been defined already, or else will be defined shortly, for the queueing network model of original interest. This notational system involves a re-use of the letter $Y$, originally defined via \eqref{eq:deviation_control}, to denote a related but different process. Our hope and belief is that this re-use will cause no confusion for readers who are alert to the practice.

\subsection{The Brownian motion $X(\cdot)$ that approximates $\tilde{X}(\cdot)$}

The core of our alternative formulation is a Brownian approximation for the centered-and-scaled process $\tilde{X}(\cdot)$ defined via \eqref{eq:rescaled_processes}. This approximation is justified by essentially the same heuristic argument that was made in Section 4 of \citet{harrison1988brownian}. To begin, let us define scaled versions of the centered primitive processes defined earlier via \eqref{eq:centered_stochastic}:
\begin{equation}
\tilde{E}(t) = \frac{1}{\sqrt{r}}\hat{E}(rt) \quad \text{and} \quad \tilde{F}^j(t) = \frac{1}{\sqrt{r}}\hat{F}^j(rt) \quad \text{for  } t \geq 0 \text{   and   } j=1,\ldots,n.
\label{eq:rescaled_primitives}
\end{equation}
From \eqref{eq:centered_queue}, \eqref{eq:rescaled_processes} and \eqref{eq:rescaled_primitives} one has that
\begin{equation}
\tilde{X}(t) = \tilde{E}(t) - \sum_{j=1}^n \tilde{F}^j(\tau_j(t)), \quad t \geq 0,
\label{eq:ScaledX}
\end{equation}
where
\begin{equation}
\tau_j(t) = \frac{1}{r}T_j(rt) \quad \text{for  } t \geq 0 \text{   and   } j=1,\ldots,n.
\label{eq:DefineTau}
\end{equation}
As noted previously in Section \ref{sec:Centering}, we interpret $\beta$ as a vector of average activity rates, or average capacity allocations, that are nearly optimal over long time spans, so it is natural to approximate $\tau_j(t)$ by $\beta_jt$. Combining that with \eqref{eq:ScaledX} and the functional central limit theorems \eqref{eq:FCLT}, one is led to approximate $\tilde{X}(t)$ by $B^0(t)+B^1(\beta_1t)+\ldots+B^n(\beta_nt)$, where $B^0(\cdot),B^1(\cdot),\ldots,B^n(\cdot)$ are mutually independent $m$-dimensional Brownian motions, each with zero drift and with covariance matrices $\Gamma^0, \Gamma^1, \ldots , \Gamma^n$, respectively. That is, we are led to the approximation $\tilde{X}(\cdot)\approx X(\cdot)$, where
\begin{equation}
X(\cdot)\text{   is  } \text{BM}(0,\Gamma) \quad \text{with} \quad \Gamma =\Gamma^0+\beta_1 \Gamma^1+ \ldots + \beta_n \Gamma^n.
\label{eq:DefineX}
\end{equation}

\subsection{The initial Brownian control problem (BCP)}
\label{sec:raw}
Substituting $X(\cdot)$ for $\tilde{X}(\cdot)$ in the scaled dynamic control problem formulated in Section \ref{sec:Centering}, we have the following key relationships for our Brownian system model:
\begin{equation}
Z(t) = Z(0) + X(t) + \zeta t + R Y(t) \geq 0, \quad t \geq 0,
\label{eq:BCP1}
\end{equation}
\begin{equation}
U(t) = KY(t), \quad t \geq 0,
\label{eq:BCP2}
\end{equation}
\begin{equation}
U(\cdot) \text{ is non-decreasing with } U(0) = 0,
\label{eq:BCP3}
\end{equation}
where $Z(0) \in \mathbb{R}_+^m$ is given as problem data. Again it should be emphasized that the displayed relationship \eqref{eq:BCP1} combines two elements of our BCP specification: first, the process $Z(\cdot)$ is defined by the equality in \eqref{eq:BCP1}, and second, the control $Y(\cdot)$ must be chosen so that $Z(\cdot)$ remains nonnegative at all times. Also, the control $Y(\cdot)$ must be non-anticipating with respect to the underlying Brownian motion $X(\cdot)$, which is the only source of uncertainty in the BCP. Finally, as the objective in our alternative Brownian formulation, we have the following obvious analog of \eqref{eq:scaledobjective}:
\begin{equation}
 \text{minimize  } \mathbb{E} \left \{ \int_{0}^{\infty} e^{- \gamma t} [ \, \tilde h \cdot Z(t) \, dt + \tilde c \cdot dU(t) \, ] \right \}.
\label{eq:Brownianobjective}
\end{equation}
Hereafter, this problem will be referred to as our \textit{initial BCP}, or BCP in initial form, to distinguish it from the \textit{modified BCP} introduced in Section \ref{sec:drift1} below.

\subsection{Equivalent workload formulation (EWF): The BCP in singular control form}
\label{sec:EWF}
A notable, non-standard feature of the initial BCP formulation immediately above is that it allows individual components of the control process $Y$ to have unbounded variation, and also to have no associated cost of control. In such cases, the system manager is able to enforce displacements in the state vector $Z$ that are instantaneous, costless and reversible, but these ``reversible displacements" can only be effected in certain directions. Assuming that reversible displacements are achievable in at least one direction, the BCP outlined in Subsection \ref{sec:raw} can be reduced to another, equivalent stochastic control problem of lower dimension. That lower dimensional problem, called an \textit{equivalent workload formulation} (EWF) in the literature, is one of \textit{singular} stochastic control, cf. \cite{karatzas1983}, and the reduction of the system manager's problem from its original form to its EWF is described by the phrase \textit{state space collapse}. The general theory of that problem reduction, developed by \citet{harrison1997dynamic}, need not be recapitulated here, but the following three examples will help to put in perspective the alternative formulation to be developed in Section \ref{sec:drift1}.

\section{Brownian control problems for three examples}
\label{sec:ThreeExamples}

In this section we return to the three examples of network control problems that were introduced in Section \ref{sec:general}. For each of them in succession, we briefly recapitulate results that have been derived in earlier work, specifically via the following stages of analysis: (a) determine an appropriate choice of the nominal activity rates $\beta_1, \ldots, \beta_n$ that were introduced in Section \ref{sec:Centering}; (b) formulate a corresponding $m$-dimensional Brownian control problem, referred to as the \textit{initial BCP} in Subsection \ref{sec:raw}; and then (c) derive the lower-dimensional EWF described in Subsection \ref{sec:EWF}. Analysis of these examples will be continued later in Section \ref{sec:Tests}.

\subsection{Criss-cross example} 
\label{sec:CrissCross}
To formulate a BCP for the criss-cross network pictured in Figure \ref{fig:criss-cross}, with the arrival rates and mean service rates specified via  \eqref{eq:criss-crossData}, we take our scaling parameter to be $r=400$ and the interest rate for discounting in the original (unscaled) model to be $\rho = 0.01$. Thus the interest rate for the corresponding BCP is $\gamma = r \rho= 4$.

The initial BCP for this network (see Subsection \ref{sec:raw}), has a three-dimensional state vector $Z(t)$, which corresponds to the scaled queue length process $\tilde{Q}(t)$ introduced in Section \ref{sec:Centering}. The data associated with the BCP include the matrices
\begin{equation}
   R =
  \left[ {\begin{array} {rrr}
   2 & 0 & 0 \\
   0 & 2 & 0 \\ 
   0 & -2 & 1
  \end{array} } \right] 
, \quad
   K = A =
  \left[ {\begin{array} {rrr}
   1 & 1 & 0 \\
   0 & 0 & 1 \\
  \end{array} } \right]
  \quad \text{and} \quad
\Gamma =
  \left[ {\begin{array} {rrr}
   2 & 0 & 0 \\
   0 & 2 & -1 \\ 
   0 & -1 & 2
  \end{array} } \right]. 
\label{eq:CrissCrossMatrices}
\end{equation}
Nominal allocation rates $\beta = (0.5, 0.5, 1)$ are proposed for the criss-cross network in Appendix \ref{Nominal}, so the three-dimensional Brownian motion $X$ underlying the criss-cross BCP has drift vector $\zeta$ and covariance matrix $\Gamma$, where 
\begin{equation*}
    \zeta = \sqrt{r} (\lambda - R \beta) = (0,-1,0)
\end{equation*}
and $\Gamma$ is given by \eqref{eq:CrissCrossMatrices}. The holding cost rates are initially taken to be $h=(h_1,h_2,h_3) =(1.5,1,1)$, and by assumption there are no idleness penalties, that is, $c=(c_1,c_2)=(0,0)$. Finally, because holding costs are the only considerations in the system manager's objective, and because all holding cost rates are scaled by the same factor in our formulation of the approximating BCP, \textit{that scaling is irrelevant}, and one can simply proceed as if $\tilde{h}=h$.

The corresponding EWF, originally derived by \cite{harrison1989scheduling} and recapitulated in Section 6 of \citep{ata2024singular}, has the two-dimensional state vector 
\begin{equation}
W(t)=MZ(t), \ t\geq0, 
\end{equation}
where
\begin{equation}
   M=
  \left[ {\begin{array} {ccc}
   0.5 & 0.5 & 0 \\
   0 & 1 & 1 \\
  \end{array} } \right].
  \label{eq:defineM}
\end{equation}
The main system equation for the EWF specifies that $W(t)=\eta(t)+U(t)$  for $t\geq0$, where $\{\eta(t),\,t\geq0\}$ is a two-dimensional Brownian motion with drift vector 
\begin{eqnarray}
    \xi = M \zeta = (-0.5, -1)
\end{eqnarray}
and covariance matrix
\begin{equation*}
   \Sigma=M \, \Gamma M^{'}=
  \left[ {\begin{array} {cc}
   1 & 0.5 \\
   0.5 & 2 \\
  \end{array} } \right].
\end{equation*}
Here $\{U(t), \ t\geq0\}$ is a non-decreasing control process whose components represent (scaled versions of) the cumulative idleness experienced by the network's two servers. Except for scale factors, $W_i(t)$ represents the expected time required for server $i$ to complete the processing of all jobs present in the system at time $t$ ($i=1,2$), so in this example the term ``workload" is entirely appropriate.

In the equivalent workload formulation, the system manager must choose a non-decreasing control $U$ such that both components of $W$ remain nonnegative, and holding costs are continuously incurred at rate $\tilde h(W(t))$, where 
\begin{equation}
\tilde h(w)=\min\{\tilde h \cdot z: Mz=w, \ z \geq 0\}, \, w \geq 0.
\label{eq:cost}
\end{equation}
To repeat, the system manager controls the workload process $W$ through idleness decisions, and then \eqref{eq:cost} is interpreted as follows: at each time $t$, the queue length vector $Z(t)$ can be taken as any $z \in \mathbb{R}_+^3$ such that $Mz=W(t)$, and the system manager naturally chooses the $z$ value with the minimum associated total holding cost. Readers are reminded that the tildes occurring in \eqref{eq:cost} reflect the following fact: in general, the objective function \eqref{eq:Brownianobjective} for our approximating Brownian control problem, and hence also for its equivalent workload formulation, involves the vectors $\tilde h$ and $\tilde c$ of \textit{scaled} cost rates. For the example currently under discussion, however, the distinction between scaled and unscaled cost rates is actually irrelevant, as noted previously in this subsection.

As mentioned earlier, the criss-cross network was introduced by \cite{harrison1989scheduling}. Those authors assumed the scaled holding cost rates $\tilde h = (1,1,1)$, which corresponds to minimizing the total number of jobs in the system. \cite{martins1996heavy} revisited the same model but allowed more general holding cost parameters. The latter authors identified five mutually exclusive and collectively exhaustive cases with respect to the holding cost parameters, and labeled them Cases I, IIA, IIB, IIC and IID. The cost structure assumed by \cite{harrison1989scheduling} is an instance of Case IIA, and they proposed an effective policy for use in that case. \cite{martins1996heavy} proved the asymptotic optimality of that policy; see also \cite{budhiraja2005large}. \cite{martins1996heavy} further advanced conjectures about the remaining cases, two of which (for Cases IIB and IIC) were rigorously justified in later papers by \cite{budhiraja2008optimal} and \cite{budhiraja2017construction}. \cite{ata2024singular} applied the computational method developed in their earlier paper \cite{ata2023drift} to numerically solve problems in all four subcases of Case II; see Figure 5 of \cite{ata2024singular}. The optimal policies displayed in that figure are consistent with the conjectures of \cite{martins1996heavy}. 

The holding cost vector $\tilde{h} =(1.5,1,1)$ that we have assumed in this section is an instance of what \cite{martins1996heavy} identified as Case IIC, and with these specific cost data, the minimization problem \eqref{eq:cost} has the following optimal solution:
\begin{eqnarray}
     z_2 = 2w_1 \wedge w_2, \ z_1 = (2w_1 -w_2)^+ \ \text{and } z_3 = (w_2 - 2w_1)^+;
     \label{eqn-ssc-criss-cross}
\end{eqnarray}
see Equations (8.5a)-(8.5b) of \cite{martins1996heavy}. Combining \eqref{eq:cost} and \eqref{eqn-ssc-criss-cross}, we then have the following: 
\begin{equation*}
  \tilde h(w)=
  \left\{ {\begin{array} {lc}
   w_2, & \text{ if } \ w_2 \geq 2w_1, \\
   3w_1 - 0.5w_2, & \text{ if } \ w_2 < 2w_1; \\
  \end{array} } \right.
\end{equation*}
see also Equation (8.4) of \cite{martins1996heavy}. The solution of the EWF for this particular instance of the criss-cross example is portrayed graphically in Figure \ref{fig:crisscross-EWF} below.

\begin{figure}[ht]
    \centering
    \includegraphics[width=0.5\linewidth]{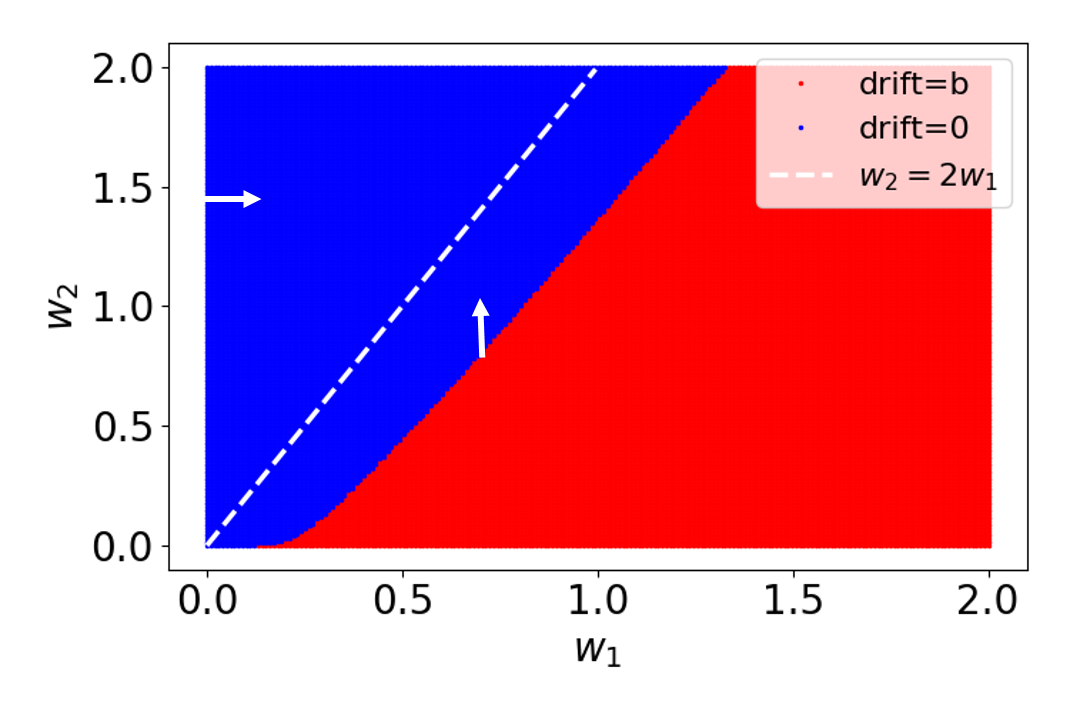}
    \caption{Optimal control policy for the criss-cross EWF (copied from \cite{ata2024singular}).}
    \label{fig:crisscross-EWF}
\end{figure}

\cite{martins1996heavy} interpreted the policy displayed in Figure \ref{fig:crisscross-EWF} as follows: First, idle server 1 only if $W$ reaches the vertical axis (that is, buffers 1 and 2 are both empty). Second, idle server 2 whenever $W$ falls in the red region; this is accomplished by having server 1 give priority to class 1 over class 2, which starves server 2. Third, give class 1 priority at station 1 if $W$ falls in the blue region and $Q_3 > \delta $; otherwise give priority to class 2. Here $ \delta >0 $ is a safety stock parameter. The best value of $ \delta $ can be determined via simulation experiments, and we expect it to be small relative to our spatial scaling parameter of $\sqrt{r}$ (see Section \ref{sec:Centering}). That is, we expect the best threshold value to be near zero when re-expressed in scaled terms.

\subsection{Pesic-Williams parallel-server example} 
\label{sec:PesicWilliams}
To formulate a BCP for the Pesic-Williams example pictured in Figure \ref{fig:pesic}, with arrival rates and mean service rates specified via \eqref{eqn-pesic-williams-arrival-service-rates}, we again take $r=400$ and $\rho=0.01$, so the interest rate for discounting in the BCP is $ \gamma = r \rho = 4$. The nominal activity rates proposed for the Pesic-Williams example in Appendix \ref{Nominal} are
\begin{eqnarray}
    \beta = (1, 0.5, 0.5, 1, 0),
\end{eqnarray}
and that proposal is consistent with Equation (9.2) of \cite{pesic2016dynamic}. Notably, activity 5 is nonbasic, that is, it is not used at all in the nominal processing plan. 

The initial BCP for the Pesic-Williams example has a three-dimensional state vector $Z(t)$, and its data include the following matrices:
\begin{equation}
    R =
  \left[ {\begin{array} {rrrrr}
   1 & 2 & 0 & 0 & 0 \\
   0 & 0 & 2 & 0& 0\\ 
   0 & 0 & 0 & 1 &1
  \end{array} } \right],  \ \ 
 A =
  \left[ {\begin{array} {rrrrr}
   1 & 0 & 0 & 0 & 1 \\
   0 & 1 & 1 & 0& 0\\ 
   0 & 0 & 0 & 1 &0
  \end{array} } \right]
\, \text{ and } \, 
    K = \left[ {\begin{array} {rr}
   A &   \\
    & -1 
  \end{array} } \right].
  \label{PWmatrices}
\end{equation}
The drift  vector for the underlying three-dimensional Brownian motion $X$ is $\zeta = (-1,-1,-1)$, and its covariance matrix is  
\begin{equation}
   \Gamma =
  \left[ {\begin{array} {ccc}
   4 & 0 & 0 \\
   0 & 2 & 0 \\
   0 & 0 & 2 \\
  \end{array} } \right].
\end{equation}
Finally, two different choices of the holding cost vector $h$ will be considered below, and there are no idleness penalties in the Pesic-Williams example ($c=0$). As in our earlier treatment of the criss-cross example (Subsection \ref{sec:CrissCross}), we shall ignore the distinction between scaled and unscaled holding costs, \textit{proceeding as if $\tilde{h}=h$}, because that scaling is irrelevant when holding costs are the system manager's only concern.
 
For the parallel-server example under discussion here, \cite{pesic2016dynamic} derive the corresponding EWF. It features the two-dimensional state vector $W(t) = M Z(t), \,t \geq 0$, where
\begin{equation}
   M=
  \left[ {\begin{array} {ccc}
   1 & 1 & 0 \\
   0 & 0 & 1 \\
  \end{array} } \right].
  \label{eq:defineM-pesic-williams}
\end{equation}
The main system equation for the EWF specifies that $W(t) = \eta (t) + G U(t)$ for $t \geq 0$, where $\{ \eta(t), \, t \geq 0 \}$ is a two-dimensional Brownian motion with drift vector $\xi = M \zeta = (-2, -1)$ and covariance matrix
\begin{equation*}
   \Sigma= M \Gamma M^{'} =
  \left[ {\begin{array} {cc}
   6 & 0 \\
   0 & 2 \\
  \end{array} } \right].
\end{equation*}
Here $U$ is a four-dimensional control with all components non-decreasing, and 
\begin{equation*}
   G=
  \left[ {\begin{array} {cccr}
   1 & 2 & 0 & 1 \\
   0 & 0 & 1 & -1 
  \end{array} } \right];
\end{equation*}
$U_1, U_2, U_3$ (and the first three columns of $G$) correspond to idling the three servers, whereas $U_4$ and the last column of $G$ correspond to introduction of activity 5 (the nonbasic activity).
  
\cite{pesic2016dynamic} further reduce the EWF to arrive at a simpler problem they refer to as the reduced equivalent workload formulation. For this example, that reduction amounts to the following: for purposes of optimal control, and for any vector $\tilde h \geq 0$ of scaled holding costs, one can set $U_1(\cdot)=U_2(\cdot)=0$ without loss of optimality. That is, an optimal policy for the EWF need never idle server 2, and insertion of the nonbasic activity 5 (server 1 processing jobs from buffer 3) is always preferable to idling server 1. 

As mentioned earlier for the criss-cross network, holding costs are incurred at rate $\tilde{h}(W(t))$ in the EWF formulation, where
$\tilde h(\cdot)$ is defined via \eqref{eq:cost}. For this example, \cite{pesic2016dynamic} show that
\begin{equation}
    \tilde{h}(w) = \min ( \tilde{h}_1 , \tilde{h}_2 ) w_1 + \tilde{h}_3 w_2, \ w \geq0. \label{eq:PWcost}
\end{equation}
Under the assumptions that (i) $\tilde{h}_1 > \tilde{h}_2$ and (ii) $\tilde{h}_2 > \tilde{h}_3$, \cite{pesic2016dynamic} show that an optimal policy for control of the workload process $W$ is the one portrayed in Figure \ref{fig:pesic-williams-EWF}, where the notation $G^j$ refers to the $j^{th}$ column of the matrix $G$ displayed above. The corresponding optimal state process $Z$ is as follows:
\begin{equation}
    Z_1(t) = 0, \ Z_2(t) = W_1(t) \ \text{and} \ Z_3(t) = W_2(t), \ \ t \geq 0.
\label{eq:PWZ}
\end{equation}

The control policy described by Figure \ref{fig:pesic-williams-EWF} and \eqref{eq:PWZ} is interpreted as follows. First, server 2 gives priority to buffer 1 except when the content of buffer 1 falls below a critical level $\delta >0$ (small in scaled terms); the goal is to keep the content of buffer 1 positive but small compared to the content of buffer 2. Second, server 1 devotes its full capacity to service of buffer 1 except when buffer 1 is empty, in which case server 1 devotes its full capacity to buffer 3 (the nonbasic activity). Finally, server 3 works full time on buffer 3 except when buffer 3 is empty, in which case it idles.


\begin{figure}[ht]
    \centering
    \includegraphics[width=0.3\linewidth]{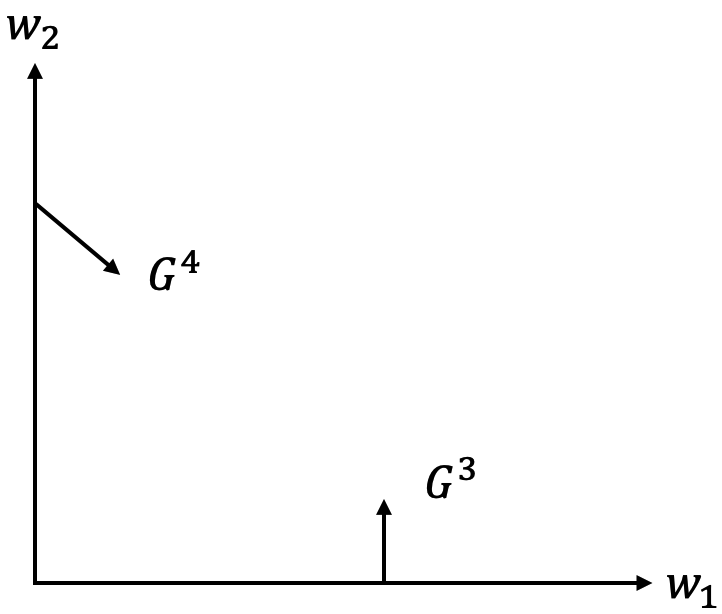}
    \caption{Optimal policy for control of $W$ in the Pesic and Williams example under assumptions (i) and (ii), adapted from Figure 5 of \cite{pesic2016dynamic}.}
    \label{fig:pesic-williams-EWF}
\end{figure}

Let us now consider the scaled holding cost vector $\tilde{h}  = (1,2,3) $, which violates assumptions (i) and (ii) above. For this case, \cite{pesic2016dynamic} state that ``We do not know how to solve the EWF in this case, as the solution may involve a free boundary on which the nonbasic activity is used." We have solved this problem instance numerically using the computational method proposed by \cite{kushner1991numerical}, and also solved it using the method developed in \cite{ata2024singular}. The very similar solutions obtained by those two methods are pictured in Figure \ref{fig:pesic-EWF}. The two methods also produce nearly identical objective values, differing by no more than $0.15 \%$.
\begin{figure}[htbp]
    \centering
    \begin{minipage}{0.48\textwidth}
        \centering
        \includegraphics[width=\textwidth]{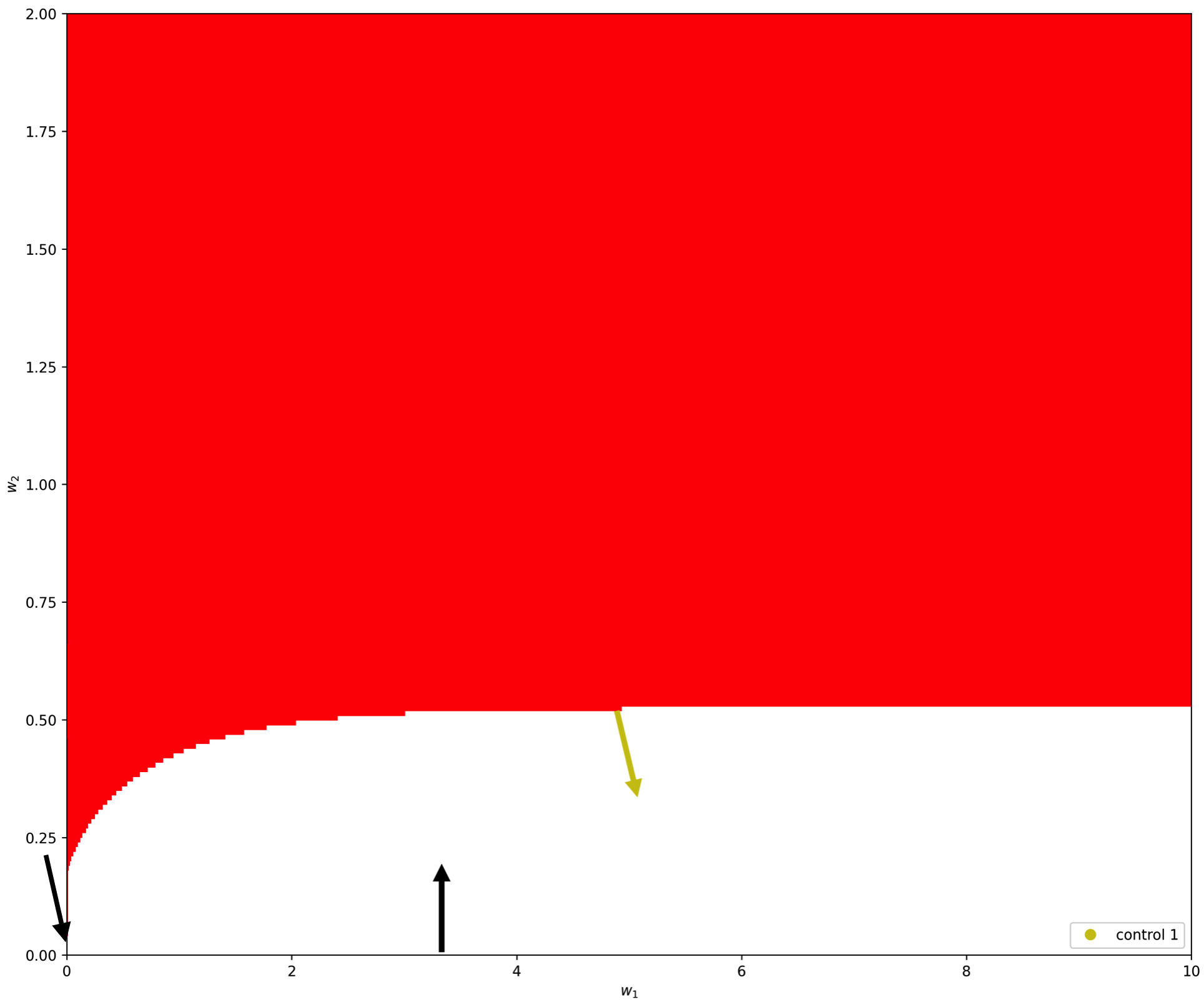}
        \par\vspace{4pt}\textbf{(a)} \citet{kushner1991numerical}.
    \end{minipage}\hfill
    \begin{minipage}{0.48\textwidth}
        \centering
        \includegraphics[width=\textwidth]{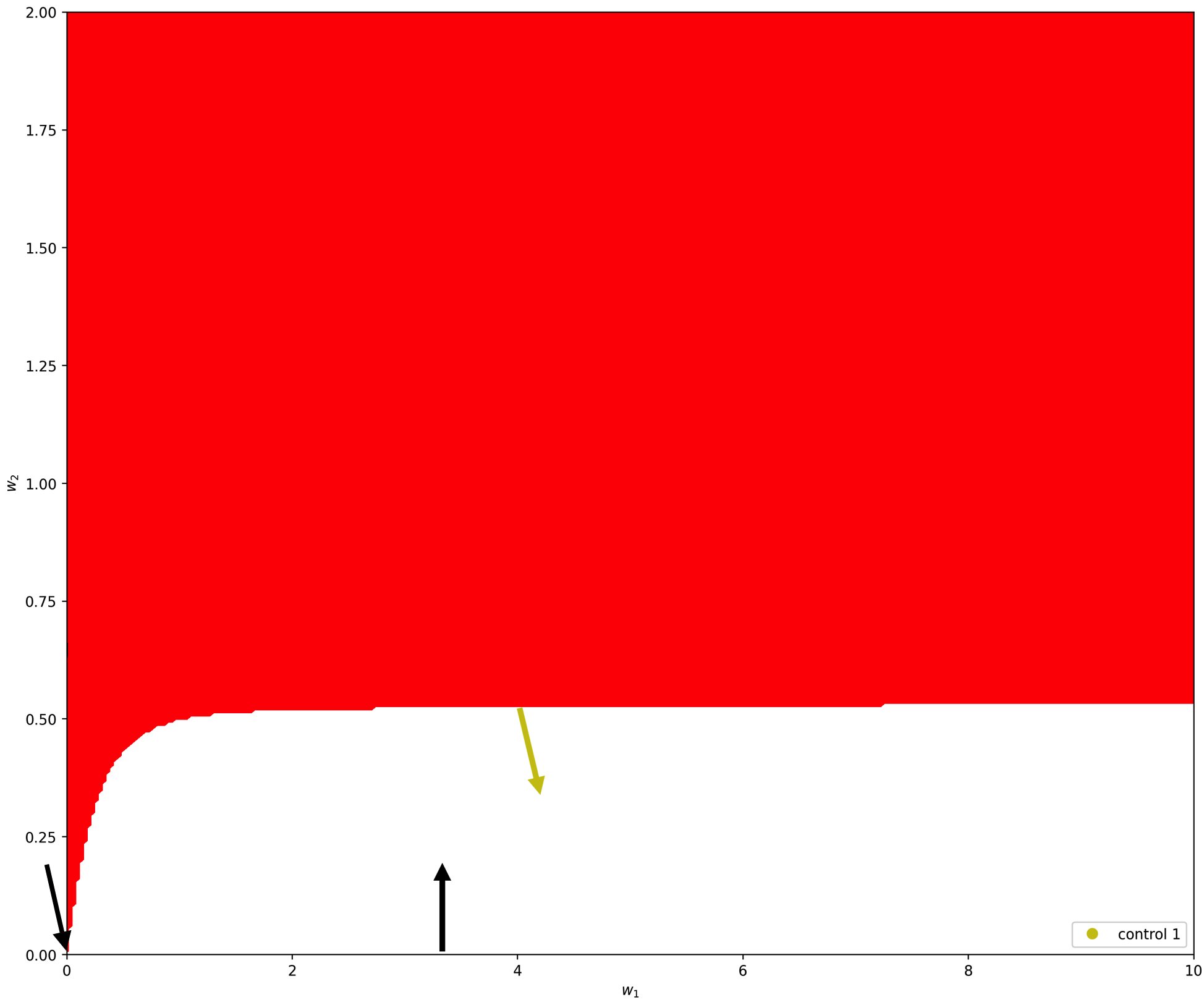}
        \par\vspace{4pt}\textbf{(b)} Ata et al. [2024].
    \end{minipage}
    \caption{Optimal control policy for the EWF in the Pesic-Williams example with $\tilde{h}  = (1,2,3) $. The slanted arrows are $45$ degrees but appear otherwise due to our choice of scales.}
    \label{fig:pesic-EWF}
\end{figure}

Figure \ref{fig:pesic-EWF} admits the following interpretation. First, server 3 devotes all of its capacity to buffer 3, except when $W$ reaches the horizontal axis (that is, except when buffer 3 is empty), in which case server 3 idles. Second, server 1 devotes its full capacity to buffer 1 except when $W$ falls in the red region (including the vertical axis), in which case its full capacity  is devoted to activity 5 (the nonbasic activity). This means that, as predicted by \cite{pesic2016dynamic}, the nonbasic activity is used in the interior of the quadrant, and the optimal policy involves the curved free boundary shown in Figure \ref{fig:pesic-EWF}. Finally, because $\tilde h_1 <\tilde h_2$ for the case under discussion, the EWF solution has $Z_2(\cdot) \equiv 0$, which one interprets to mean that server 2 gives priority to buffer 2 at all times.

\subsection{Three-station example} 
\label{sec:ThreeStation}

Let us consider again the three-station network pictured in Figure \ref{fig:three-station}, assuming that jobs of types A, B, and C arrive to the system according to independent Poisson processes with average arrival rates 0.5, 0.25 and 0.25 jobs per hour, respectively. Also, class $k$ jobs have exponentially distributed service times with mean $m_k \, (k=1, \ldots ,8)$, where 
\begin{equation*}
( m_1, \ldots, m_8 ) = (2,3,1,1,1,1,2,1);
\end{equation*}
of course, for each class $k=1,\ldots,8$ the mean service rate is $\mu_k=1/m_k$. Recall from Subsection \ref{sec:InputControl} that we define for this example four different ``input activities" numbered 9, 10, 11 and 12, performed by three fictional ``input servers" numbered 4, 5 and 6. From the definitions of those activities given earlier, plus the hypothesized arrival rates for jobs of types A, B and C, we then have the following mean service rates:
\begin{equation}
\mu_9=0.5, \,\, \mu_{10}=0.5, \,\, \mu_{11}=0.25 \quad \text{and} \quad \mu_{12}=0.25.
\end{equation}
This means, for example, the following: if the fictional input server 6 devotes its full capacity to activity 12 (creation of class 6 jobs), that will generate arrivals into buffer 6 according to a Poisson process with average arrival rate 0.25 per time unit. Because all arrivals from the external world are generated by endogenous ``input activities" in this example, its associated vector of (uncontrollable) exogenous arrival rates is defined to be $\lambda=0$.

As in the criss-cross and parallel-server examples discussed earlier, the interest rate for discounting is taken to be $ \rho = 0.01$, and to formulate an approximating Brownian control problem, we take our scaling parameter to be $r = 400$. Thus, the interest rate used in our BCP is again $\gamma = r \rho =4$. \citet{harrison1996bigstep} and \citet{harrison1997dynamic} derived the initial BCP for this example, whose data include the matrices
\begin{equation}
    R =\left[ {\begin{array} {rrrrrrrrrrrr}
   1/2 &  &  &  & & & & & -1/2 &&& \\
   -1/2 & 1/3 &  & & & & & &&&& \\
    &  & 1 & & & & & & & -1/2 && \\
    & &  & 1 &  &  & & && & -1/4& \\
    & &  & -1& 1 & & & &&&&\\
    & & &       &      & 1 &  & &&&& -1/4 \\
    & & &       &     & -1& 1/2&  &&&& \\
    & & &       &      &      & -1/2 &1  &&&& \\
  \end{array} } \right]
  \label{eqn:Rvalues-3-station}
\end{equation}
and
\begin{equation}
    K =A =\left[ {\begin{array} {rrrrrrrrrrrr}
   1 &  &  &  & & &1 & &  &&& \\
    & 1 &  & & 1 & & & &&&& \\
    &  & 1 & 1& & 1& &1 & &  && \\
    & &  &  &  &  & & &1& 1& & \\
    & &  & & & & & &&&1&\\
    & & &       &      &  &  & &&&& 1 \\ 
  \end{array} } \right].
  \label{eqn:Avalues-3-station}
\end{equation}

In Appendix \ref{Nominal} the following 12-vector of nominal allocation rates is proposed for the three-station example:
\begin{equation*}
    \beta=(0.5,0.75,0.25,0.25,0.25,0.25,0.5,0.25,0.5,0.5,1,1).
\end{equation*}
The last four components of $\beta$ reflect a nominal policy of accepting all external arrivals, routing half of the type A arrivals through buffer 1 and the other half through buffer 3. Thus, under the nominal processing plan embodied in $\beta$, there will be external arrivals into buffers 1, 3, 4 and 6, each at average rate $0.25$ per time unit, but no external arrivals into buffers 2, 5, 7 or 8. Given the deterministic routing pictured in Figure \ref{fig:three-station}, the average flow rate through each of the model's eight buffers will then be 0.25. The first eight components of $\beta$ dictate that the capacities of the three original ``processing servers" be divided among their constituent buffers in proportions consistent with those flow rates.

The eight-dimensional Brownian motion $X$ underlying the initial BCP for this example has drift vector $\zeta=0$, and its covariance matrix is
\begin{equation*}
    \Gamma =\left[ {\begin{array} {rrrrrrrr}
   1/2 & -1/4 &  &  & & & &  \\
   -1/4 & 1/2 &  & & & & &  \\
    &  & 1/2 & & & & &  \\
    & &  & 1/2 & -1/4 &  & & \\
    & &  & -1/4 & 1/2 & & & \\
    & & &       &      & 1/2 & -1/4  & \\
    & & &       &      & -1/4 & 1/2 & -1/4  \\
    & & &       &      &      & -1/4 & 1/2 \\
  \end{array} } \right].
\end{equation*}
The model's economic parameters (that is, its holding cost rates and idleness cost rates) will be specified below.

\citet{harrison1996bigstep} and \citet{harrison1997dynamic} also derived
the EWF formulation for the three-station queueing network. It is a singular control problem with a two-dimensional state vector $W(t)=MZ(t)$ for $ t \geq 0$, where $Z$ is the eight-dimensional state vector for the initial BCP and 
\begin{equation*}
M=\left[ 
\begin{array}{cccccccc}
2 & 0 & 2 & 2 & 0 & 6 & 4 & 2 \\ 
3 & 3 & 3 & 4 & 1 & 6 & 3 & 3%
\end{array}%
\right] .
\end{equation*}
\begin{figure}[tbp]
\centering
\includegraphics[width=6in]{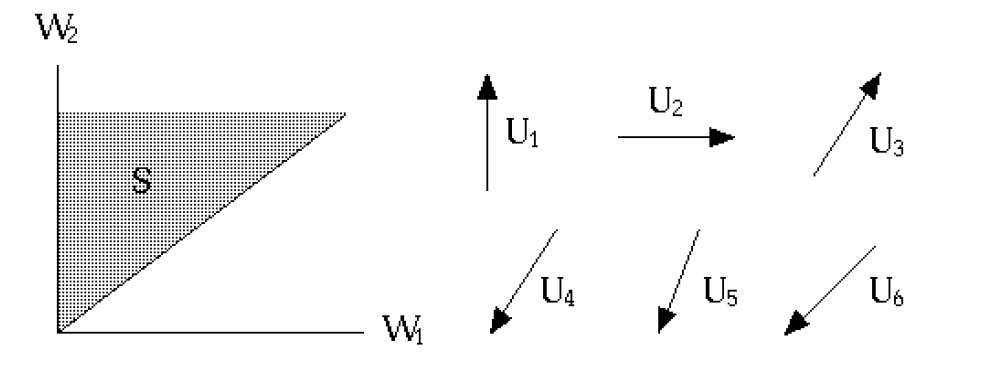}
\caption{The set $S$ and available directions of control (copied from 
\citet{harrison1996bigstep}).}
\label{fig:bigstep:control}
\end{figure}
Earlier in this section we saw that the workload dimension
of the criss-cross network (that is, the dimension of its EWF) equals the
number of that system's service stations, but in the current example the
system manager's dynamic routing capability reduces the workload dimension
from three (the number of service stations) to two. %
\citet{harrison1997dynamic} interpret components of $W$ as scaled workloads
for two overlapping ``server pools," one composed of servers 1 and 3, the
other composed of servers 2 and 3. The state space for $W$ is not the two-dimensional orthant, but rather the wedge
\begin{equation*}
S = \{w \in \mathbb{R}_+^2 : w = Mz, z \geq 0\},
\end{equation*}
which is pictured in Figure \ref{fig:bigstep:control}. The main system equation for the EWF is $W(t)=\eta(t)+GU(t)$ for $ t \geq 0$, where $\eta$ is a two-dimensional Brownian motion with zero drift and covariance matrix
\begin{equation*}
\Sigma=\left[ 
\begin{array}{cc}
18 & 20  \\ 
20 & 29 
\end{array}%
\right],
\end{equation*}
$U$ is a six-dimensional control with all components non-decreasing, and
\begin{equation*}
G=\left[ 
\begin{array}{cccccc}
1 & 0 & 2 & -1 & -1/2 & -3/2 \\ 
0 & 1 & 3 & -3/2 & -1 & -3/2%
\end{array}%
\right] .
\end{equation*}
The six columns of $G$ are the directions of displacement associated with the control components $U_1,\ldots,U_6$, and those directions are also pictured in Figure \ref{fig:bigstep:control}.

Control modes 1 through 3 correspond to idling servers 1 through 3, while 
control modes 4, 5 and 6 correspond to rejecting arrivals of types A, B and
C, respectively. The associated control cost vector $c$ has the form $c =
(0, 0, 0, c_4, c_5, c_6)$, where $c_4, c_5 \text{ and } c_6$ are
strictly positive. The three zero
components of $c$ reflect an assumption that there is no direct cost of
idling servers, whereas the positive values of $c_4, c_5 \text{ and } c_6$
represent the costs (expressed in units like dollars per minute) of ``turning off"
arrivals of types A, B and C, respectively; one can think of those ``costs"
either as direct penalties or as opportunity costs for business foregone. It
is noteworthy that, because the third column of $G$ can be written as a
positive linear combination of the first and second columns, and
furthermore, control modes 1, 2 and 3 are all costless (that is, $%
c_1=c_2=c_3=0$), control mode 3 can actually be eliminated from the problem
formulation. Equivalently, one can set $U_3(t)=0$ for all $t \geq 0$ without
loss of optimality, which is consistent with the optimal policy described below.

The remaining data for the EWF of the three-station example are its holding cost rates and idleness cost rates. Here we directly specify that the \textit{scaled} cost rates for use in the BCP and its EWF are as follows:
\begin{eqnarray*}
    \tilde h = (6,3,6,6,1,12,7,6) \text{\quad and \quad} \tilde c = (0, 0, 0, 0.495, 0.3, 0.675).
\end{eqnarray*}
Of course, the unscaled cost rates that give rise to these values can be determined from the formulas for $\tilde h$ and $\tilde c$ in \eqref{eq:scaledcoeffs}, plus the value $r=400$ of the scaling parameter. In particular, the unscaled idleness cost rates for the fictional servers numbered 4, 5 and 6 are $(c_4,c_5,c_6)=(198,120,270)$.

An important point to note is the following: each positive component of $c$ represents a \textit{cost per unit of idle time} for the corresponding input server, not a cost per arrival rejected, so the average arrival rate for the input stream must be factored into its computation. Positive components of $\tilde c$ have the same meaning, but with both cost and time expressed in scaled terms.

We now define the holding cost function $\tilde h(\cdot)$ for the three-station EWF as in our previous treatment of the criss-cross and Pesic-Williams examples, that is, 
\begin{equation}
\tilde h(w)=\min\{\tilde h \cdot z:Mz=w,z\geq 0 \},\quad w\in S.
\label{eqn:3station-running-cost} 
\end{equation}
As before, one can interpret or animate \eqref{eqn:3station-running-cost} as follows: the system manager controls the workload process $W$ directly (in this case, through both idleness decisions and input control decisions), and then, at each time $t$, can take the queue length vector $Z(t)$ to be any $z \in \mathbb{R}_+^8$ satisfying $Mz=W(t)$; granted that latitude, the system manager chooses the $z$ value with the minimum associated total holding cost.

\begin{figure}[htbp]
\centering
\begin{subfigure}[b]{0.4\textwidth}
\centering
\includegraphics[width=\textwidth]{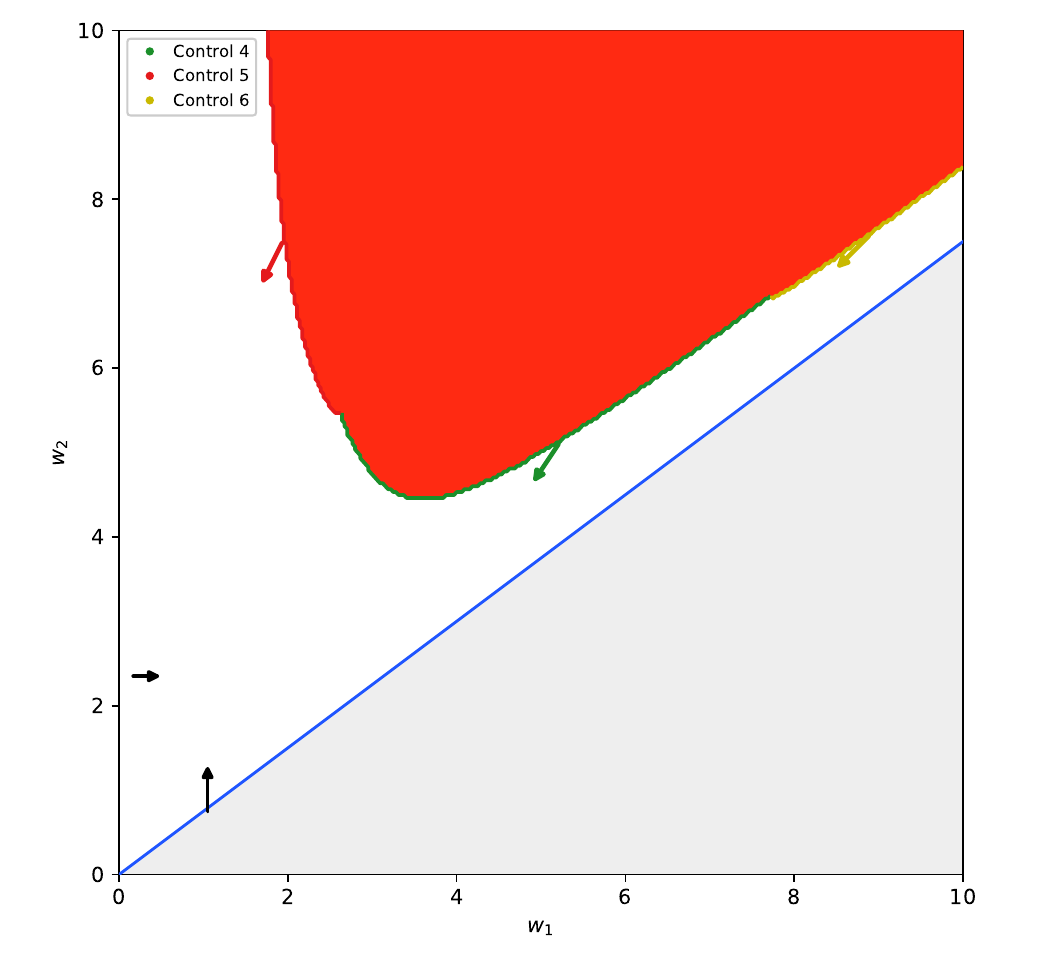}
\caption{KM policy}
\end{subfigure}
\hfill
\begin{subfigure}[b]{0.4\textwidth}
\centering
\includegraphics[width=\textwidth]{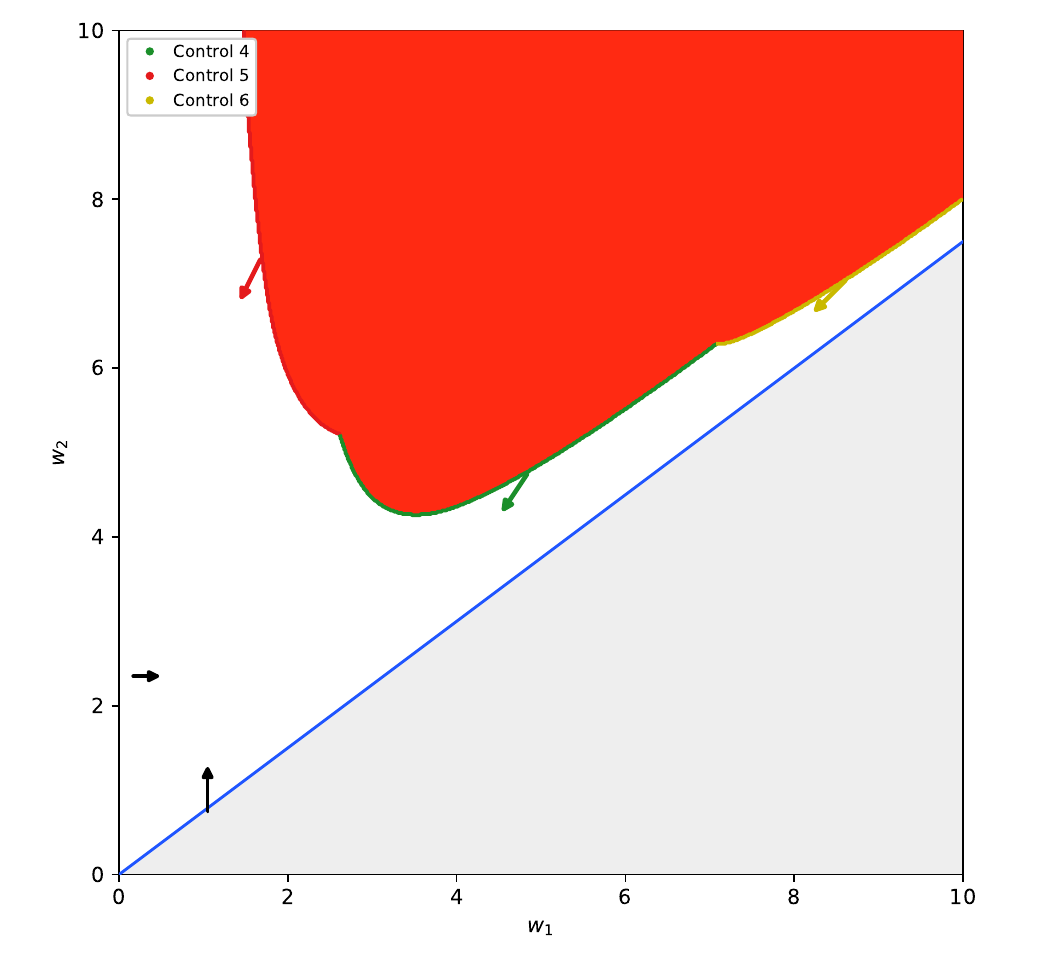}
\caption{AHS policy}
\end{subfigure}
\caption{Optimal control policy for the three-station EWF. }
\label{figure:kusher}
\end{figure}

For the problem parameters specified above, we have solved the EWF using the computational method developed by \cite{kushner1991numerical}, and then using the method proposed by \cite{ata2024singular}. The solutions obtained by the two methods are displayed in Figure \ref{figure:kusher}, labeled there as the KM policy and the AHS policy. The two policies are very similar, each one using only five of the six modes of control, which one interprets as follows: by using the alternate routing capability mentioned earlier, and idling servers 1 and 2 when necessary, the system manager is able to never idle server 3 (that is, can set $U_3(\cdot) \equiv 0$). The black arrows in Figure \ref{figure:kusher} show that servers 1 and 2 are idled only when there is no potential work for that server to do anywhere in the system, and the three colored arrows show that new arrivals of each type A, B or C are rejected in a particular part of the state space. As in the Pesic-Williams example, the two methods also produce nearly identical optimal objective values, differing by no more than 0.25\%.

Finally, but not shown in the figure, only two components of the eight-vector $Z(t)$ will be positive at a typical time $t$, because the minimization problem \eqref{eqn:3station-running-cost} is a linear program with two equality constraints, so basic solutions of that LP problem have only two positive variables. Of course, just \textit{which} two scaled queue length variables $z_i$ (that is, which scaled buffer content variables) take positive values in \eqref{eqn:3station-running-cost} depends on the current workload vector $w$: the white area in Figure \ref{figure:kusher} divides into polyhedral regions that each correspond to a different combination of two non-empty buffers.

How is this EWF solution to be implemented in our original problem context? Its input control recommendations (that is, when to reject external arrivals of each type) are straightforward, as are its instructions with regard to server idleness, but as in the criss-cross example discussed earlier in this subsection, the recommendations with regard to buffer content levels cannot be taken literally. How are we to route type A arrivals, and to prioritize the job classes served at each station, so as to maintain approximately the recommended buffer content levels, and to keep server 3 working continuously as the system evolves over time?

By analogy with the criss-cross example, it seems plausible that the buffer content values of zero that appear in our EWF solution should each be interpreted to mean  ``small" (in comparison with our spatial scaling factor $\sqrt{r}$), and that the desired implementation can be achieved through a system of dynamic priorities based on threshold parameters. However, translating that vague philosophical stance into a detailed policy prescription, not just for this example but in general, is a problem of head-spinning complexity, and so we now consider a radically different approach.

\section{A modified BCP with bounded drift rates as controls}
\label{sec:drift1}
As noted previously, our initial BCP (Subsection \ref{sec:raw}) features an $n$-dimensional control process $Y$ whose individual components $Y_j$ are allowed to have unbounded variation, and that leads to the mathematically equivalent EWF described in Subsection \ref{sec:EWF}. For a less extreme idealization of the network control problem originally formulated in Section \ref{sec:general}, we now modify the initial BCP by restricting attention to controls $Y$ of a form specified immediately below. 

The state descriptor in our modified BCP, as in the initial BCP, is an $m$-dimensional process $Z$ taking values in the orthant $\mathbb{R}_+^m$, but now admissible controls must have the form
\begin{equation}
dY(t) = \theta(t)\,dt + Q\,dL(t),
\label{eq:formofY}
\end{equation}
where: $\theta=\{\theta(t), t \geq 0\}$ is a right-continuous, $n$-dimensional drift control process, adapted to the Brownian motion $X$, to be chosen by the system manager subject to the upper bound constraint \eqref{eq:bdd_density} below; $L=\{L(t), t \geq 0\}$ is an $m$-dimensional, non-decreasing \textit{boundary process} (that is, components of $L$ increase only when $Z$ is on the boundary of the orthant) that will be defined in terms of $\theta$ via \eqref{eq:pushing} and \eqref{eq:complementarity} below; and $Q$ is an $n \times m$ matrix whose structure will be specified in Subsection \ref{sec:drift2} to follow. (The letter $Q$ was used earlier in Section \ref{sec:general} to denote an $m$-dimensional buffer contents process, or queue length process, but that notation will not be needed in the remainder of the paper.) Given that controls have the form \eqref{eq:formofY}, the main system equation \eqref{eq:BCP1} in our initial BCP takes the following form:
\begin{equation}
Z(t) = Z(0) + X(t) + \zeta t + R\left[\int _0^t \theta(s)\,ds + QL(t)\,\right] \geq 0, \quad t \geq 0.
\label{eq:BCP1new}
\end{equation}

Given a drift control $\theta$, the boundary process $L$ is defined by the combination of \eqref{eq:BCP1new} and the following relationships:
\begin{equation}
L_i(\cdot) \text{ is continuous and non-decreasing with } L_i(0)=0 \text{ } (i=1,\ldots,m),
\label{eq:pushing}
\end{equation}
and
\begin{equation}
L_i(\cdot) \text{ increases only at times $t$ when } Z_i(t)=0 \text{ } (i=1,\ldots,m).
\label{eq:complementarity}
\end{equation}
The effect of $L(\cdot)$ is to impose a lower reflecting barrier at zero on each component of $Z$. More precisely, the \textit{direction of reflection} from the boundary surface $Z_i=0$ is given by the $i${th} column of the $m \times m$ matrix 
\begin{equation}
H:=RQ,
\label{eq:defineG}
\end{equation}
and $Q$ will be chosen so that 
\begin{equation}
H \text{ is a completely-S matrix}, 
\label{eq:BCP2new}
\end{equation}
as defined, for example, in the book by \citet{cottle2009linear}. Given this restriction, $Z$ is well defined as a semimartingale reflected Brownian motion (SRBM), cf. \citet{taylor1993existence}. Its state-dependent drift depends on the system manager's control policy $\theta$, and it has the boundary behavior described above.

One can think of $\theta$ as a policy chosen by the system manager for controlling $Z$ on the interior of the orthant, whereas $L$ and $Q$ together constitute a pre-specified policy for control at the boundary. Elements of $Q$ are artificial parameters chosen to promote computational efficiency, but they can also be viewed as specifying a \textit{baseline control policy} that underlies our computational procedure; see below for further discussion. The crucial added restriction in our modified BCP formulation is that each component of the system manager's drift control process must be absolutely continuous with uniformly bounded density. To be specific, we require that
\begin{equation}
\theta_j(\cdot)\leq a_j \text{ for all } j=1,\ldots,n,
\label{eq:bdd_density}
\end{equation}
where $a_1,\ldots, a_n >0$ are upper bound parameters chosen at the discretion of the analyst; an argument will be given shortly to suggest that values of order $\sqrt r$ are appropriate for faithful representation of our original network control problem. Readers can easily check that the upper bounds \eqref{eq:bdd_density}, the policy constraint \eqref{eq:newBCP3} imposed below, and the definition of the matrix $K$ together imply that components of $\theta(\cdot)$ are uniformly bounded \textit{below} as well.

To review, our specification of a modified BCP includes all of the relationships \eqref{eq:BCP1new}, \eqref{eq:pushing}, \eqref{eq:complementarity} and \eqref{eq:bdd_density}, where $X$ is an $m$-dimensional Brownian motion with zero drift and the covariance matrix $\Gamma$ defined via \eqref{eq:DefineX}, $\zeta$ is the nominal drift vector defined via \eqref{eq:heavy_traffic}, $\theta$ is an $n$-dimensional drift control process adapted to $X$, and $Q$ and $a_1,\ldots, a_n$ are tuning parameters chosen to facilitate computations.

To complete the problem specification, we need to specialize the capacity constraints \eqref{eq:BCP3} and the objective function \eqref{eq:Brownianobjective} to controls of the form \eqref{eq:formofY} considered here. With that goal in mind, we next require that $Q$ be chosen so that
\begin{equation}
KQ\geq 0,
\label{eq:newBCP6}
\end{equation}
where $K$ is the $(p+n-b) \times n$ matrix defined via \eqref{eq:define_K}. The inequality \eqref{eq:newBCP6} ensures that each component of the vector process $KQL(\cdot)$ is non-decreasing, and with that restriction in place, \eqref{eq:BCP3} reduces to the following constraint on the system manager's drift control process $\theta(\cdot)$: 
\begin{equation}
K\theta(\cdot)\geq 0.
\label{eq:newBCP3}
\end{equation}
Similarly, given \eqref{eq:formofY} and the definition $U(t)=KY(t)$, the differential $dU(t)$ in our BCP objective \eqref{eq:Brownianobjective} can be re-expressed as $K\theta(t) \, dt+ KQ\,dL(t)$. Thus the objective in our modified BCP is the following:
\begin{equation}
 \text{minimize  } \mathbb{E} \left \{ \int_{0}^{\infty} e^{- \gamma t} [\, \tilde h\,' Z(t) \, dt + \tilde c\,' K\theta(t) \, dt + \tilde c\,'KQ\,dL(t) \,] \right \}.
\label{eq:newBrownianobjective}
\end{equation}
(Here primes are used to denote transposes of the column vectors $\tilde h$ and $\tilde c$, because the inner product notation used earlier in \eqref{eq:Brownianobjective} might cause confusion in the current context.) For each of the examples discussed later in Section \ref{sec:Tests}, the matrix $Q$ will be chosen so that 
\begin{equation}
\tilde c\,'KQ=0,
\label{eq:nobdrycost}
\end{equation}
which causes the final term on the right side of \eqref{eq:newBrownianobjective} to vanish, regardless of how $\theta(\cdot)$ may be chosen. In fact, for virtually all of the applications we currently envision, it will be natural to choose a matrix $Q$ that satisfies the identity \eqref{eq:nobdrycost}. As our examples will show, \eqref{eq:nobdrycost} holds when (a) the only positive elements of the cost rate vector $\tilde c$ are penalty rates associated with rejecting arrivals of various classes, and (b) the control actions embodied in the matrix $Q$ do not involve any such rejections. 

Hereafter, the stochastic control problem \eqref{eq:BCP1new}-\eqref{eq:newBrownianobjective} will be referred to as our \textit{modified BCP}. This is a problem of the form considered in our earlier paper \citet{ata2023drift}, so it can be solved via the ``deep learning" method developed there; see Section \ref{sec:Tests} below for illustrative applications. Our computational method takes as inputs the system parameters $\Gamma$, $\zeta$ and $R$, the interest rate $\gamma>0$, and the scaled cost rate vectors $\tilde h$ and $\tilde c$, plus the tuning parameters $Q$ and $a$; it returns as outputs the problem's optimal value function $V(\cdot)$ and its associated gradient function $\nabla V(\cdot)$. In Section \ref{sec:policy} below, we will construct a continuous-review queueing control policy based on $\nabla V(\cdot)$.

Because components of $Y$ correspond to scaled deviation controls (see Section \ref{sec:Centering}), the crucial policy constraint \eqref{eq:bdd_density} is interpreted to mean that deviations from the nominal processing plan must occur at bounded rates, at least away from the boundary of the state space. This eliminates the potential for instantaneous state adjustments by the system manager, and hence eliminates the state space collapse illustrated in Subsection \ref{sec:EWF}.
\newline

\noindent \textbf{Some guidance on how to set the upper bound parameters $a_1,\ldots, a_n$.} The capacity constraints \eqref{eq:capacity-constraint} in our original formulation of the network control problem require that the vector process $T(\cdot)$ of cumulative capacity allocations be absolutely continuous. It follows that the process $\tilde Y(\cdot)$ of scaled deviation controls must be absolutely continuous as well. That is, $d\tilde{Y}(t)=\tilde \theta(t) \, dt$ for some density process $\tilde \theta(\cdot)$, and by combining various relationships that appear in Section 3, one finds that components of $\tilde \theta(\cdot)$ must satisfy 
\begin{equation}
-\sqrt r (1-\beta_j) \leq \tilde \theta_j(t) \leq \sqrt r \beta_j \quad \text{for  } t \geq 0 \text{   and   } j=1,\ldots,n.
\label{eq:more_bounds}
\end{equation}
Because the control process $Y(\cdot)$ in our modified BCP serves as a surrogate for $\tilde Y(\cdot)$, its density $\theta(\cdot)$ may be viewed as a surrogate for $\tilde \theta(\cdot)$, so \eqref{eq:more_bounds} suggests that the upper bound parameters $a_1,\ldots, a_n$ in \eqref{eq:bdd_density} should be of order $\sqrt r$ to faithfully reflect our original problem context.

\subsection{The matrix $Q$ and its interpretation}
\label{sec:drift2}
To ensure that the state space constraint $Z(\cdot) \geq 0$ can be met, despite the upper bound constraint \eqref{eq:bdd_density} on drift rate controls, we have included the boundary term $Q\,dL(t)$ in the general form \eqref{eq:formofY} of an admissible control for our modified BCP.  However, the resulting boundary behavior is not just a mathematical contrivance: each component $L_i(t)$ can be interpreted as (a scaled version of) the cumulative amount of time that buffer $i$ has been empty over the interval $[0,t]$, and then elements of $Q$ specify deviations from nominal activity rates that are implemented in conjunction with such emptiness; positive values specify \textit{decreases} from nominal activity rates (or equivalently, from nominal capacity allocations), and negative values specify increases.

For a more complete explanation of that view, let us first recall that (a) the total capacity of each server is 1 by convention (see Section \ref{sec:general}), and (b) the $n$-vector $\beta$ of nominal activity rates satisfies $A\beta=e$ by assumption (see Section \ref{sec:Centering}); the latter equality is interpreted to mean that the nominal processing plan exhausts the capacities of all servers. Combining these two elements of the model, one sees that each of the nominal activity rates $\beta_j$ can be interpreted as a capacity allocation: specifically, $\beta_j$ is the fraction of the capacity of server $s(j)$ that is devoted to activity $j$ under the nominal processing plan. We now specify that, for each $i=1,\ldots,m$,
\begin{equation}
    Q_{ji} \text{  }
\begin{cases} 
= \beta_j  & \text{if } b(j) = i, \\
= 0         & \text{if } b(j) = 0, \\
\leq 0     & \text{otherwise}.
\end{cases}
\label{eq:defineQ}
\end{equation}
In words, \eqref{eq:defineQ} specifies the following deviations from nominal activity rates when buffer $i$ is empty ($i=1,\ldots,m$). First, any activity $j$ that serves buffer $i$ is discontinued, which releases a fraction $\beta_j$ of the capacity of server $s(j)$ for possible alternative uses. Second, no adjustments are made to the nominal activity rates for input activities. Third, all other activity levels are increased either by positive amounts, subject to the capacity constraint \eqref{eq:capcon} below, or not at all. 

An important element of our modified BCP formulation (Section \ref{sec:drift1}) is the policy constraint \eqref{eq:newBCP6}, requiring that $KQ \geq 0$. Reviewing our definition \eqref{eq:define_K} of the matrix $K$, one sees that \eqref{eq:newBCP6} restricts the choice of $Q$ in two ways. First, it requires that
\begin{equation}
AQ\geq0. 
\label{eq:capcon}
\end{equation}
This means that, for each $i=1,\ldots,m$, the activity level deviations embodied in the $i$th column of $Q$ are respectful of capacity constraints. That is, the activity level \textit{increases} that are implemented when $Z_i=0$ require no more added capacity than the amounts released by discontinued basic activities. The second requirement embodied in  \eqref{eq:newBCP6} is that, on each of the boundary surfaces $Z_i=0 \text{ } (i=1,\ldots,m)$, nonbasic activities must be introduced at nonnegative rates, if at all. That second requirement has already been included in our specification \eqref{eq:defineQ}. Our final constraint on the choice of $Q$ is \eqref{eq:BCP2new}, requiring that the $m \times m$ matrix $H:=RQ$ be completely-S. As stated previously, this restriction is essential, ensuring that the process $Z$ in \eqref{eq:BCP1new} is well defined.

In the remainder of this subsection, we discuss the $Q$-matrix choices that were used in our computational studies of the three examples introduced earlier in Section \ref{sec:general}. For purposes of this discussion, the notation S1 is used to mean ``server 1," B1 to mean ``buffer 1," A1 to mean ``activity 1," and similarly for other servers, buffers and activities.
\newline
\newline
\textbf{Criss-cross example.} To formulate the modified BCP for the criss-cross network, we set
\begin{equation}
   Q=
  \left[ {\begin{array} {ccc}
   0.5 & - 0.5\kappa & 0 \\
   - 0.5 \kappa & 0.5 & 0 \\
   0 & 0 & 1 \\
  \end{array} } \right],
  \label{eq:defineQ-matrix}
\end{equation}
where $\kappa \in (0,1)$. For an interpretation of this choice, first consider the meaning of \eqref{eq:defineQ-matrix} if one were to take $\kappa =1$. The nominal processing plan (see Appendix \ref{Nominal}) dictates that S1 divide its capacity equally between A1 and A2 (that is, between serving B1 and serving B2), while S2 devotes all of its capacity to A3. If one were to take $\kappa=1$, then the first column of $Q$ would specify the following whenever B1 is empty (that is, whenever $Z_1=0$): S1 will redirect half of its capacity from A1 to A2, thus devoting \textit{all} of its capacity to serving B2. Similarly, the second column of $Q$ would specify that, whenever B2 is empty (that is, whenever $Z_2=0$), S1 will devote all of its capacity to serving B1. Finally, the third column of $Q$ specifies that, whenever B3 is empty (that is, whenever $Z_3=0$), S2 simply idles.

By choosing a value $\kappa<1$, we are specifying the following for this particular example. First, when A1 must be suspended because B1 is empty, the S1 capacity that is nominally allocated to A1 cannot all be reallocated; specifically, a fraction $(1-\kappa)$ of that nominal allocation must be left unused, contributing to the cumulative idleness of S1, while the remaining fraction $\kappa$ is reallocated to A2. A similar statement holds about the reallocation of S1 capacity when B2 is empty (that is, when $Z_2=0$). With $Q$ specified via \eqref{eq:defineQ-matrix}, our definition \eqref{eq:defineG} of the matrix $H$ gives the following:
\begin{equation}
H = R \, Q =
  \left[ {\begin{array} {ccc}
   1 & - \kappa & 0 \\
   -\kappa & 1 & 0 \\
    \kappa & -1 & 1 \\
  \end{array} } \right].
  \label{eq:defineH-matrix}
\end{equation}
It will be shown in Appendix \ref{Verifying} that this matrix $H$ is completely-S for any choice of $\kappa \in (0,1)$, and readers can easily verify that \eqref{eq:defineQ-matrix} also meets the other restrictions imposed on $Q$ earlier in this section. In the numerical experiments reported later in Section \ref{sec:Tests}, we have used the value $\kappa = 0.99$, which means that one-half of one percent of S1 capacity remains unused if either B1 or B2 is empty. 

Using general language that applies equally well to all three of our examples, the condition $\kappa<1$ has the following interpretation: whenever a basic activity must be suspended or discontinued due to emptiness of its associated buffer, a fraction $(1-\kappa)$ of its nominal capacity allocation remains unused, contributing to the cumulative idleness of the associated server. This requirement ensures that the matrix $H=RQ$ is completely-S for each of our examples (see Appendix \ref{Verifying}); for our criss-cross and three-station examples, it is easy to show that $H$ is \textit{not} completely-S if one takes $\kappa=1$.
\newline
\newline
\textbf{Pesic-Williams parallel-server example.} For this three-server example we set 
\begin{equation}
   Q =
  \left[ {\begin{array} {ccc}
   1 & 0 & 0 \\
   0.5 & - 0.5 \kappa & 0 \\
   - 0.5 \kappa  & 0.5 & 0 \\
   0 & 0 & 1 \\
   - \kappa  & 0 & 0
  \end{array} } \right],
  \label{eq:QforPW}
\end{equation}
where $\kappa \in (0,1)$, and from that one obtains
\begin{equation}
   H = RQ =
  \left[ {\begin{array} {ccc}
   2 & - \kappa & 0 \\
   - \kappa  & 1 & 0 \\
   - \kappa & 0 & 1 \\
  \end{array} } \right].
  \label{eq:HforPW}
\end{equation}
As in the criss-cross example, the condition $\kappa<1$ ensures that $H$ is completely-S (see Appendix \ref{Verifying}), and \eqref{eq:QforPW} also meets the other restrictions imposed on $Q$ earlier in this section. For the numerical experiments reported later in Section \ref{sec:Tests}, we again set $ \kappa = 0.99$.

For an interpretation of \eqref{eq:QforPW}, first recall that the nominal processing plan calls for S1 to use all of its capacity serving B1 (this is designated as A1), while S2 divides its capacity equally between serving B1 and serving B2 (these are A2 and A3), and S3 devotes all of its capacity to serving B3 (designated as A4). The first column of $Q$ in \eqref{eq:QforPW} then specifies the following adjustments when B1 is empty (that is, when $Z_1=0$): S1 reallocates a fraction $\kappa$ of its capacity to serving B3 (this is designated as A5), while the remaining fraction $(1-\kappa)$ is unused; and second, a fraction $\kappa$ of the S2 capacity that is nominally allocated to serving B1 is reallocated to serving B2 (that is, reallocated to A3), while the remaining fraction $(1-\kappa)$ of that nominal allocation is unused. The second column of $Q$ specifies a similar adjustment in the use of S2 capacity when $Z_2=0$, and the third column specifies that S3 is idled when $Z_3=0$.
\newline
\newline
\textbf{Three-station example.} For this more complex network, $Q$ is a $12 \times 8$ matrix. We take its first 8 rows to be 
\begin{equation}
  Q_{8\times 8}=
  \left[ {\begin{array} {c}
  \begin{array} {rrrrrrrr}
   1/2 & &  &  & & & -\kappa/2 &  \\
    & 3/4 &  & &-\kappa/4 & & &  \\
    &  & 1/4 & -\kappa/12 & & -\kappa/12 & & -\kappa/12  \\
    &  & -\kappa/12 & 1/4 &  &-\kappa/12  & &-\kappa/12 \\
    &- 3 \kappa/4 &  &  & 1/4 & & & \\
    & & -\kappa/12& -\kappa/12      &      & 1/4 &  &-\kappa/12 \\
   -\kappa/2 & & &       &      &  & 1/2&   \\
    & & -\kappa/12&  -\kappa/12     &      &  -\kappa/12    & & 1/4 \\
  \end{array}\\[1mm]
  \end{array} } \right],
\label{eq:LastOne}
\end{equation}
where $\kappa \in (0,1)$; its last 4 rows are the $4\times 8$ matrix of zeros. That choice gives
\begin{equation}
   H = RQ =
  \left[ {\begin{array} {rrrrrrrr}
   1/4 & &  &  & & & -\kappa/4&  \\
   -1/4 & 1/4 &  & &-\kappa/12 & & \kappa/4 &  \\
    &  & 1/4 & -\kappa/12 & & -\kappa/12 & & -\kappa/12  \\
    &  & -\kappa/12 & 1/4 &  &-\kappa/12  & &-\kappa/12 \\
    &-3\kappa/4 & \kappa/12  &-1/4  & 1/4 &\kappa/12  & & \kappa/12 \\
    & & -\kappa/12& -\kappa/12      &      & 1/4 &  &-\kappa/12 \\
   -\kappa/4 & &\kappa/12  &  \kappa/12      &      & -1/4 & 1/4& \kappa/12   \\
   \kappa/4  & & -\kappa/12&  -\kappa/12     &     &  -\kappa/12    & -1/4& 1/4 \\
  \end{array} } \right].
\label{eq:ThirdH}
\end{equation}
This $8 \times 8$ matrix $H$ is completely-S (see Appendix \ref{Verifying}) for any choice of $\kappa \in (0,1)$, and again we have taken $ \kappa = 0.99$ for our computational study. Also, one can verify that our choice of $Q$ satisfies all the other restrictions that were imposed earlier in this section.

For an interpretation of \eqref{eq:LastOne}, let us consider the fourth column of $Q$, which specifies deviations from the nominal activity rates that are to be implemented when B4 is empty (that is, when $Z_4=0$). The server responsible for processing jobs from B4 is S3, and the nominal processing plan for this example (see Appendix \ref{Nominal}) calls for S3 to divide its capacity equally among its four constituent buffers, that is, to divide its capacity equally among A3, A4, A6 and A8. When $Z_4=0$, the one-fourth of S3 capacity that is nominally allocated to A4 must be reallocated: a fraction $\kappa$ of that nominal allocation is divided equally among the server's other three responsibilities, so the activity rates (or equivalently, the capacity allocations) for A3, A6 and A8 are all increased by $\kappa/12$; finally, a fraction $(1- \kappa)$ of the nominal allocation is left unused. Other columns of $Q$ are interpreted similarly, each of them specifying deviations from the nominal plan that are to be implemented when a particular buffer is empty. 

\subsection{Admissibility of the drift controls allowed}
\label{sec:convergence}
Denoting by $Q^i$ the $i$th column of $Q$, and rewriting \eqref{eq:formofY} in a slightly expanded form, we have the following: the controls $Y(\cdot)$ in our modified BCP are those of the form 
\begin{equation}
   dY(t)=\theta(t) \, dt + \sum_{i=1}^m Q^i \, dL_i(t),
\label{eq:control}
\end{equation} 
where $\theta(\cdot)$ and $Q$ satisfy restrictions stated earlier in this section. Those restrictions on $\theta(\cdot)$ and $Q$ include $K \theta(\cdot) \geq 0$ and $KQ^i \geq 0$ for each $i$, while $L_i(\cdot)$ is non-decreasing for each $i$ by definition. Thus $KY(\cdot)$ is non-decreasing as well, so each $Y(\cdot)$ of the form \eqref{eq:control} is an admissible control for our initial BCP (Subsection \ref{sec:raw}). 

\section{A network control policy based on the modified BCP solution}
\label{sec:policy}
To solve the modified BCP introduced in Section \ref{sec:drift1}, we restrict attention to stationary Markov controls. That is, attention will be restricted to drift controls of the form
\begin{equation}
    \theta (t) = u(Z(t)), \ \ t \geq 0,
    \label{eqn:markov-policy-defn}
\end{equation}
for some measurable function $u: \mathbb{R}_+^m \rightarrow \Theta$, where $\Theta$ is the action space for our control problem, namely,
\begin{equation}
    \Theta = \left\{ \theta \in \mathbb{R}^n : K \theta \geq 0 \, \text{ and }  \theta_j \leq a_j \, \text{ for all } j=1,\ldots,n \right\};
\label{eqn:defineTheta}
\end{equation}
here $a_1,\ldots, a_n$ are the upper bound parameters in \eqref{eq:bdd_density}. The function $u$ in \eqref{eqn:markov-policy-defn} will be called a \textit{policy}, and we denote by $\mathcal{U}$ the set of all such $u$. Also, we denote by $Z^u$ the controlled RBM under policy $u$, which is defined via (\ref{eq:BCP1new}) and (\ref{eqn:markov-policy-defn}). Next, assuming that $Q$ is chosen to satisfy \eqref{eq:nobdrycost}, let $V^u(\cdot)$ be the value function for policy $u$, meaning that
\begin{eqnarray*}
    V^u(z) = \mathbb{E}_z\left[ \int_0^\infty  e^{- \gamma t} \left( \tilde{h}^{'} Z^u(t) + \, \tilde{c}^{\,'}K u(Z^u(t)) \right) dt\right], \, z \in \mathbb{R}_+^m,
\end{eqnarray*}
where $\mathbb{E}_z(\cdot)$ denotes as usual a conditional expectation given that $Z(0) = z$. Finally, we define the optimal value function
\begin{equation*}
    V(z) = \min_{u \in \mathcal{U}} V^u(z) \ \text{ for each } z \in \mathbb{R}_+^m.
\end{equation*}
 
\noindent \textbf{Hamilton-Jacobi-Bellman equation for the modified BCP.} For an arbitrary $C^2$ function $f : \mathbb{R}^m \rightarrow \mathbb{R}$, let $\nabla f: \mathbb{R}^m \rightarrow \mathbb{R}^m$ be the corresponding gradient function. Also, we define a second-order differential operator $\mathcal{L}$ via
\begin{eqnarray*}
    \mathcal{L}f = \frac{1}{2} \sum_{i=1}^m \sum_{j=1}^m \Gamma_{ij} \frac{\partial^2 f}{\partial z_i \, \partial z_j} ,
\end{eqnarray*}
and a first-order operator $\mathcal{D} = ( \mathcal{D}_1, \ldots, \mathcal{D}_m)$ via
\begin{equation*}
    \mathcal{D}f =H^{'} \nabla f;
\end{equation*}
here $\Gamma$ is the covariance matrix for the modified BCP being solved, and $H$ is defined via \eqref{eq:defineG}. Thus, $\mathcal{D}_if(\cdot)$ is the directional derivative of $f$ in the direction of reflection on the boundary surface $\{z \in \mathbb{R}_+^m : z_i =0\}$. The optimal value function $V(\cdot)$ for our modified BCP satisfies the HJB equation
\begin{equation}
    \mathcal{L}V(z) + \min_{\theta \in \Theta} \left\{ \nabla V(z) \cdot R\theta + \tilde{c}^{\,'}K \theta \right\} 
    + \zeta \cdot \nabla V(z) + \tilde{h} \cdot z = \gamma V(z), \ z \in \mathbb{R}_+^m,
\end{equation}
with boundary conditions
\begin{equation}
    \mathcal{D}_iV(z) = 0 \ \text{ if } \  z_i =0 \ (i=1,\ldots,m),
\end{equation}
and given a solution, an optimal policy function $u^*(\cdot)$ can be computed using $\nabla V(\cdot)$ as usual:
\begin{equation}
   u^*(z) =  \text{argmin}_{\, \theta \in \Theta} \left\{ \nabla V(z) \cdot R\theta + \tilde{c}^{\,'}K \theta \right\}, \ z\geq 0.
   \label{eqn:optimal-drift-policy}
\end{equation}

\noindent \textbf{Characterization of the optimal policy.} Let $R^j$ and $K^j$ denote the $j^{th}$ column of $R$ and the $j^{th}$ column of $K$, respectively. Also, let us denote by $\mathcal{A}(k)$ the set of activities that are conducted by server $k$, that is,
\begin{equation*}
\mathcal{A}(k)= \left\{j \in \{1,\ldots,n\}:A_{kj}=1 \right\} \quad \text{for  } k=1,\ldots,p.
\end{equation*}
We can then re-express the linear program on the right side of (\ref{eqn:optimal-drift-policy}) as follows: choose $\theta \in \mathbb{R}^n$ to 
\begin{equation}
 \text{minimize } \ \sum_{j=1}^n \left( \nabla V(z) \cdot R^j + \tilde{c} \cdot K^j \right) \theta_j 
    \label{eqn:LP-objective}
\end{equation}
subject to
\begin{equation}
\sum_{j \in \mathcal{A}(k)} \theta_j \geq 0 \quad \text{for all  } k=1,\ldots,p,  
\label{eqn:LP-sec-constraint}
\end{equation}
\begin{equation}
\theta_j \leq 0 \quad \text{for all} \quad j=1,\ldots,n \text{  such that  }  \beta_j=0,
\label{eqn:LP-sec-constraint2}
\end{equation}
and
\begin{equation}
\theta_j \leq  a_j \quad \text{for all  } j=1,\ldots,n.
\label{UB}
\end{equation}
(Together, the constraints \eqref{eqn:LP-sec-constraint} and \eqref{eqn:LP-sec-constraint2} are equivalent to $K\theta\geq0$.)
Crucially, the linear program \eqref{eqn:LP-objective}-\eqref{UB} can be decomposed into $p$ separate problems, one for each server, as follows. For each $k= 1, \ldots,p$, let $\Theta(k)$ be the set of drift components $\left\{\theta_j, j \in \mathcal{A}(k) \right\}$ that satisfy \eqref{eqn:LP-sec-constraint2} and \eqref{UB} for all $j \in \mathcal{A}(k)$ and also satisfy \eqref{eqn:LP-sec-constraint}. Then \eqref{eqn:LP-objective}-\eqref{UB} can be restated as follows: for each $k= 1, \ldots,p$, choose $\left\{\theta_j, j \in \mathcal{A}(k) \right\} \in \Theta(k)$ so as to
\begin{equation}
\text{minimize } \ \sum_{j \in \mathcal{A}(k)} \left( \nabla V(z) \cdot R^j + \tilde{c} \cdot K^j \right) \theta_j.
\label{eqn:decomposed-LP-objective}
\end{equation}
To solve this problem for a given state $z \in \mathbb{R}_+^m$, let us define the index function
\begin{equation}
    \pi_j(z) = \nabla V(z) \cdot R^j + \tilde{c} \cdot K^j, \ \ j=1,\ldots,n.
\label{eq:DefinePi}
\end{equation}
Components of the $m$-vector $R^j$ are the rates at which one \textit{increases} components of the state vector $z$ by increasing the drift component $\theta_j$, so $\pi_j(\cdot)$ represents an expected rate of \textit{cost increase} effected by increasing $\theta_j$ in the observed state, including both immediate and future costs. Temporarily suppressing the dependence of $\pi$ on $z$, we then restate the decomposed objective \eqref{eqn:decomposed-LP-objective} as
\begin{equation} 
\text{minimize } \ \sum_{j \in \mathcal{A}(k)} \pi_j \theta_j.
\label{eqn:FinalDecomp}
\end{equation}

\noindent \textbf{Interpreting the solution of the modified BCP in our original problem context.} To restate the decomposed optimization problem \eqref{eqn:FinalDecomp} in terms that are meaningful for the network model of original interest, let 
\begin{equation}
\mathcal{X}(k)=\left\{x_j \geq 0, \, j \in \mathcal{A}(k): \sum_{j \in \mathcal{A}(k)} x_j \leq 1\right\} \quad \text{for  } k=1,\ldots,p.
\end{equation}
Elements of $\mathcal{X}(k)$ represent feasible allocations of server $k$ capacity to the activities conducted by that server, ignoring the requirement that positive allocations can only be made to non-empty buffers. Thus, elements of $\mathcal{X}(k)$ have roughly the same role in our original network control problem as do elements of $\Theta(k)$ in the modified BCP. Rewriting the decomposed optimization problem \eqref{eqn:FinalDecomp} in terms of capacity allocations $x$, we argue in the paragraph to follow that the appropriate objective in the rewritten problem is to

\begin{equation} 
\text{maximize } \ \sum_{j \in \mathcal{A}(k)} \pi_j x_j.
\label{eqn:AlternateDecomp}
\end{equation}

The switch from minimization in \eqref{eqn:FinalDecomp} to maximization in \eqref{eqn:AlternateDecomp} results from our original definition of a deviation control as a \textit{decrement} from a nominal capacity allocation. Drift vectors in the modified BCP correspond to scaled deviation controls, so increasing a component of the BCP drift vector \textit{decreases} the corresponding capacity allocation. Considering that reversal in orientation, one can restate the interpretation of $\pi_j$ as follows: it is the rate at which one \textit{decreases} cost by increasing $x_j$ in the observed state, including both future and immediate costs. From this the objective \eqref{eqn:AlternateDecomp} is obvious.
\newline
\newline
\noindent \textbf{Proposed policy for the original discrete-flow network.} We use the method of \cite{ata2023drift} to solve the modified BCP, saving the gradient function $\nabla V(\cdot)$ for future use; to be precise, it is a neural network representation of $\nabla V(\cdot)$ that is saved.

To specify a control policy for the original queueing network model, we fix a time $t \geq 0$ and suppose that $Q(t)=q \in \mathbb{Z}_+^m$. That is, we denote by $q$ the $m$-vector of buffer contents (or queue lengths) observed at time $t$. The corresponding scaled state vector $z$ for the modified BCP is $z=r^{-1/2}q$, and we compute the indices $\pi_1(z),\ldots,\pi_n(z)$ given the current state via \eqref{eq:DefinePi}. Given those computed values, our task is to determine an $n$-vector $x$ of capacity allocations for use in state $q$. Based on the analysis above, we decompose that task into specifying a vector of capacity allocations for each server separately.

That requires consideration of an issue previously ignored, as follows. For each server $k=1,\ldots,p$, let $\mathcal{A}_k^*(q)$ be the set of all processing activities $j \in \mathcal{A}(k)$ such that $q_{b(j)}>0$. In words, $\mathcal{A}_k^*(q)$ is the set of all activities available to server $k$ whose associated buffer is non-empty. Our proposal is that capacity allocations be determined for each server $k$ separately by solving the maximization problem \eqref{eqn:AlternateDecomp}, but with positive allocations allowed only to activities $j \in \mathcal{A}_k^*(q)$. The solution of that very simple linear program is the following. 
\newline
\newline
\fbox{\parbox{1.0\textwidth}{In state $q$, each server $k=1,\ldots,p$ idles if either $\mathcal{A}_k^*(q)$ is empty or $\pi_j(z)<0$ for all $j \in \mathcal{A}_k^*(q)$. Otherwise, server $k$ devotes its full capacity to an activity $j \in \mathcal{A}_k^*(q)$ for which $\pi_j(z)$ is maximal.}}
\newline\newline
\newline
\noindent \textbf{Criss-cross example.} We now illustrate the proposed policy for the three examples introduced earlier, restricting attention throughout to the expected case where 
\begin{equation}
    \frac{\partial V}{\partial z_i} \geq 0 \ \ \text{for all } \ i.
    \label{eqn:positive-gradient}
\end{equation}
For the criss-cross network, recall that $\mathcal{A}(1) = \{1,2 \}$, $\mathcal{A}(2) = \{ 3\}$, and $\tilde c = 0$. Thus, using the matrix $R$ in \eqref{eq:CrissCrossMatrices}, formula \eqref{eq:DefinePi} gives
\begin{eqnarray*}
    \pi_1 = 2 \frac{\partial V}{\partial z_1}, \ \ \pi_2 = 2 \left( \frac{\partial V}{\partial z_2} - \frac{\partial V}{\partial z_3}\right) \ \textrm{ and } \ \pi_3 = \frac{\partial V}{\partial z_3}.
\end{eqnarray*}
Because there is a one-to-one correspondence between buffers and activities in this example, we will refer to buffers and activities interchangeably. Given that (\ref{eqn:positive-gradient}) holds, our proposed policy dictates the following. First, server 2 works on buffer 3 except when it is empty, as one would expect. Second, if buffers 1 and 2 are both non-empty, then server 1 works on whichever of them has the larger index $\pi_j$. If buffer 2 is empty, then server 1 works on buffer 1, given that (\ref{eqn:positive-gradient}) holds. Finally, if buffer 1 is empty but buffer 2 is not, it is still possible that server 1 will choose idleness, because $\pi_2$ could be negative even when (\ref{eqn:positive-gradient}) holds.

Each of these prescriptions is more or less obvious from interpreting $\partial V / \partial z_i$ as the rate of decrease in expected total cost per unit of decrease in buffer $i$'s content. (This is the ``shadow price" interpretation of the gradient function $\partial V / \partial z$.) Thus, for example, an increment of capacity that server 1 devotes to activity 2 has the effect of decreasing $q_2$ but increasing $q_3$, each at expected rate 2 (the average service rate for that activity), and the formula for $\pi_2$ converts that physical description into a rate of change in expected total cost.
\newline
\newline
\noindent \textbf{Pesic-Williams Example.} Here the activities available to the three servers are  $\mathcal{A}(1) = \{1,5 \}$, $\mathcal{A}(2) = \{  2,  3\}$ and $\mathcal{A}(3) = \{ 4 \}$. Also, there are no penalties associated with server idleness in this example ($\tilde c = 0)$. Combining that data with the average service rates and routing information in Figure \ref{fig:pesic}, formula \eqref{eq:DefinePi} gives the following:
\begin{eqnarray*}
    \pi_1 = \frac{\partial V}{\partial z_1}, \ \pi_2 = 2 \frac{\partial V}{\partial z_1}, \ \pi_3 = 2 \frac{\partial V}{\partial z_2}, \ \pi_4 = \frac{\partial V}{\partial z_3} \ \text{ and } \ \pi_5 = \frac{\partial V}{\partial z_3}.
\end{eqnarray*}
Once again, each server $k=1,2,3$ checks activities $j \in \mathcal{A}(k)$, eliminates from consideration those whose corresponding buffers are empty, and devotes all of its capacity to whichever of the remaining ones has the largest index $\pi_j$. Given that (\ref{eqn:positive-gradient}) holds, we have $\pi_j \geq 0$ for all $j$, so a server will idle only when all of its constituent buffers are empty. 
\newline
\newline
\noindent \textbf{Three-Station Example.} In this example, servers 1,2,3 are processing servers, whereas those numbered 4,5,6 are fictional ``input servers."  Also, the activities available to the various servers are
\begin{equation*}
\mathcal{A}(1) = \{ 1, 7 \}, \quad \mathcal{A}(2) = \{ 2,5 \}, \quad \mathcal{A}(3) = \{ 3, 4, 6, 8 \}, \quad \mathcal{A}(4) = \{ 9, 10 \}, \quad \mathcal{A}(5) = \{ 11 \}, \quad \mathcal{A}(6) = \{ 12 \}.
\end{equation*}

For the processing activities $j=1,\ldots,8$, index values $\pi_j$ are calculated and used exactly as in our preceding examples. For the input activities, formula \eqref{eq:DefinePi} gives
\begin{equation*}
\pi_9 = \tilde c_4 - \frac{1}{2} \frac{\partial V}{\partial z_1}, \quad
\pi_{10} = \tilde c_4 - \frac{1}{2} \frac{\partial V}{\partial z_3}, \quad
\pi_{11}= \tilde c_5 -  \frac{1}{4}\frac{\partial V}{\partial z_4}, \quad \text{and} \quad
\pi_{12}= \tilde c_6 - \frac{1}{4} \frac{\partial V}{\partial z_6}. 
\end{equation*}
For these activities, $\pi_j$ is best understood as the rate at which one \textit{increases} expected total cost by \textit{decreasing} capacity allocated to activity $j$. To see what that actually means, consider activity 9, conducted by the fictional server 4, which corresponds to accepting type A arrivals and directing them to buffer 1. To evaluate the expected cost of a decrease in that activity, one needs to use three different factors: first, the cost per time unit of idling server 4, which is the cost per time unit of ``turning off" the incoming stream of type A jobs, which is $\tilde c_4$; second, the average rate at which activity 9 generates input to buffer 1, which is the average arrival rate for type A jobs, which is $1/2$; and third, the rate at which increasing the content of buffer 1 increases expected total cost, which is $\partial V / \partial z_1$. Note that the cost of ``turning off" an input stream must be expressed in \text{scaled} terms (that is, we use $\tilde c_4$ rather than $c_4$ in the formula for $\pi_9$), because the partial derivatives of $V$ are expressed in scaled terms.

In any given state, server 4 calculates the values of $\pi_9$ and $\pi_{10}$. If at least one of them is $\geq 0$, then the server ``allocates its full capacity" to activity 9 or activity 10, depending on which has the larger index value. In the former case, this means that arriving jobs of type A are accepted rather than rejected, and they are sent to buffer 1, thereafter following the upper route in Figure \ref{fig:three-station}. In the latter case, type A arrivals are accepted and sent along the lower route, starting in buffer 3. If both $\pi_9$ and $\pi_{10}$ are negative in the observed state, then type A arrivals are rejected.

For the other two input streams, admission control is simpler: type B arrivals are accepted by server 5 if  $\pi_{11} \geq 0$ (that is, if $4 \, \tilde c_5 \geq \partial V / \partial z_4$), and are rejected otherwise; similarly, server 6 accepts type C arrivals if $4\,\tilde c_6 \geq \partial V/\partial z_6$, and rejects them otherwise.

\section{Computational Results}
\label{sec:Tests}

In this section, we solve the modified BCP for each of our three examples, using the problem formulation developed in Section~\ref{sec:drift1} and the computational method of \citet{ata2023drift}. The resulting policy is then implemented in a simulation of the original discrete-flow network, as described in Section~\ref{sec:policy}. In each case, we compare the cost performance of our proposed policy for the discrete-flow network against a best available benchmark. To be specific, the performance measure that we record for each policy in each problem setting, referred to as both the policy's  ``cost" and its ``objective value" at different points in the text below, is the average discounted sum of all costs incurred over an infinite horizon, with the system starting empty in each case.

For the first two examples, the original (exact) network control problem is a Markov decision process (MDP) under the assumptions made here, and we are able to solve that problem numerically. We then use the optimal MDP policy as our benchmark. For the three-station network example, which has an eight-dimensional state descriptor, numerical solution of the MDP is not feasible. We therefore use a greedy heuristic policy (a cost-aware variant of the maximum-pressure policy) as our benchmark; for added perspective, the performance of that greedy heuristic will also be reported for the first two examples.

\subsection{Criss-cross network}
\label{sec:criss-cross}
\begin{figure}
    \centering
    \includegraphics[width=0.7\linewidth]{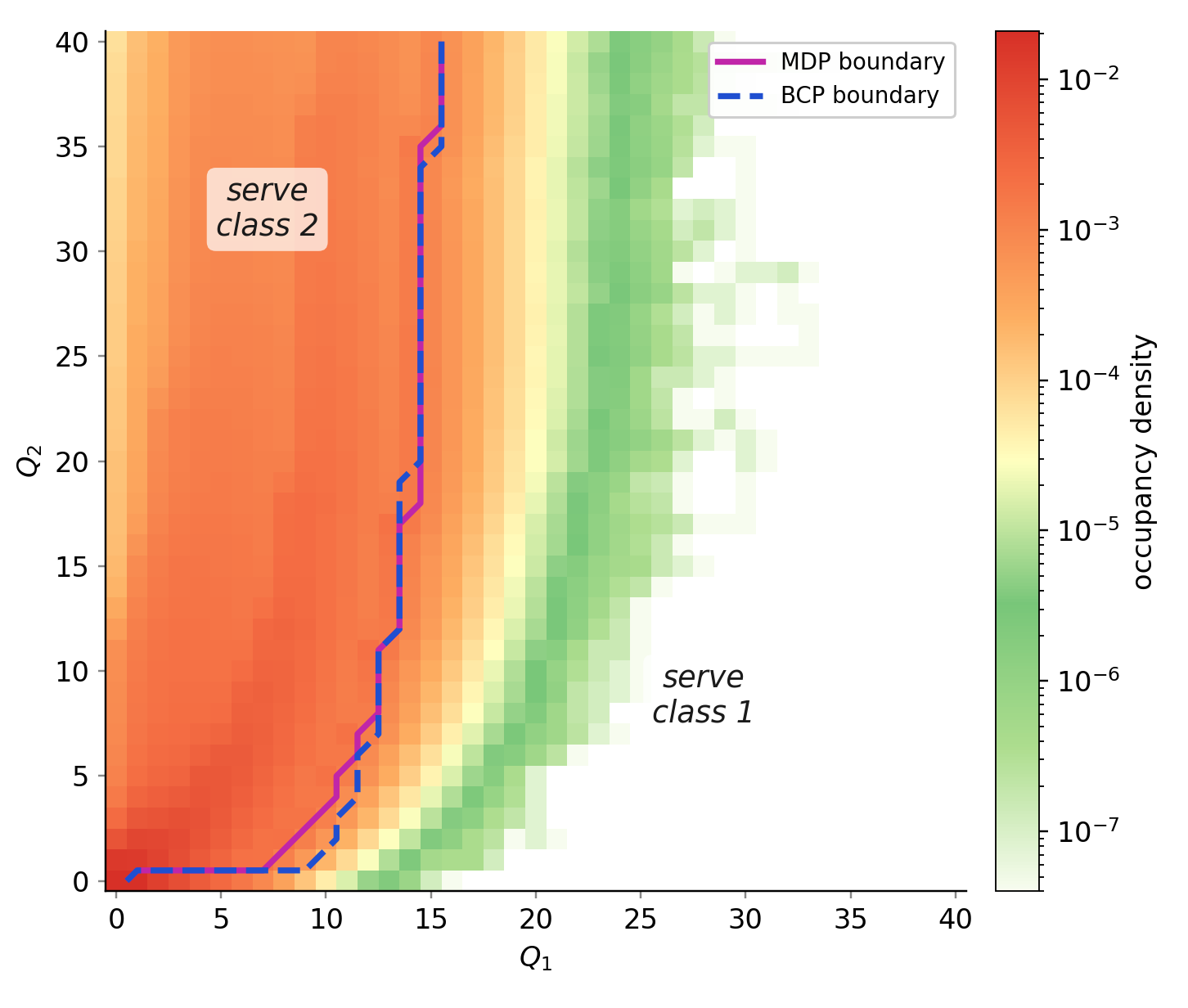}
    \caption{MDP (purple, solid) and BCP (blue, dashed) scheduling boundary on the slice $Q_3=0$. 
    Background shading is the MDP-policy state-occupancy density conditioned on $Q_3=0$ (logarithmic color scale).}
    \label{fig:Martins-BCP}
\end{figure} 
 
We first consider the criss-cross network example depicted in Figure~\ref{fig:criss-cross}, for which the BCP and EWF were described in Subsection \ref{sec:CrissCross}. To obtain a benchmark for the discrete-flow network, we numerically solve the associated MDP. Figure \ref{fig:Martins-BCP} compares the optimal MDP policy with our proposed policy for the discrete-flow network, which we refer to as the BCP policy. Motivated by \eqref{eqn-ssc-criss-cross}, one expects $Q_3(t) \approx 0$ whenever the workload vector falls below the dotted white line in Figure \ref{fig:crisscross-EWF}. Therefore, we focus attention on the case $ Q_3 = 0$ in Figure \ref{fig:Martins-BCP}  and display the optimal action as a function of $Q_1$ and $Q_2$. The solid purple and dashed blue curves represent the switching curves for the MDP and BCP policies, respectively: when the queue-length process lies above the switching curve, server 1 gives priority to class 2; and when it falls below the switching curve, server 1 gives priority to class 1, thereby starving server 2 and causing it to idle. The background shading in Figure \ref{fig:Martins-BCP} shows the relative amount of time that the system spends in each state under the MDP policy when starting empty, with darker colors indicating higher state frequencies.

The two policies pictured in Figure \ref{fig:Martins-BCP} are very similar, and Figure 6b of \cite{ata2024singular} provides a similar comparison of the MDP policy with yet another policy for the criss-cross example, one that was derived by solving the EWF and interpreting that solution in the manner proposed by \cite{martins1996heavy}. We refer to the latter as \textit{the singular control policy}. 

Table \ref{tab:criss-cross} reports the simulated cost performance, with standard errors, of four policies for the criss-cross queueing network: the greedy heuristic policy that is described and motivated in Appendix \ref{app:computation}; the optimal MDP policy; the singular control policy with safety stock parameter $s^*=1$; and the BCP policy proposed in Section \ref{sec:policy}. The singular control and BCP policies each achieve an objective value that differs from that of the optimal MDP policy by just a fraction of one percent. The objective value for the greedy heuristic is higher by approximately $6\%$.
  
\begin{table}[!ht] \caption{Simulated cost performance of four policies for the criss-cross network.} \label{tab:criss-cross} \centering \begin{tabular}{@{}lr@{}} \toprule \textbf{policy} & \textbf{cost} \\ \midrule MDP & $1681.7 \pm 1.5$ \\ greedy heuristic & $1789.4 \pm 2.3$ \\ singular control $(s^*=1)$ & $1690.3 \pm 1.5$ \\ BCP & $1686.6 \pm 2.1$ \\ \bottomrule \end{tabular} \end{table}

\subsection{Pesic-Williams parallel-server example}
\label{sec:parallel-server}
We turn next to the Pesic-Williams parallel-server example displayed in Figure~\ref{fig:pesic}. As in the criss-cross network example, we numerically solve the associated MDP to obtain a benchmark policy. Figure \ref{fig:PW-BCP} compares this benchmark policy with our proposed policy, given the cost vector $\tilde{h} = (1,2,3)$. In this case, an optimal solution of the EWF holding-cost minimization problem \eqref{eq:PWcost} is $z_1 = w_1, \, z_2 =0, \, z_3 = w_2$. That is, it is best to keep all of the workload $w_1$ in buffer 1, setting the buffer 2 queue length to zero.

Motivated by this, we focus on the case $Q_2 = 0$ in Figure \ref{fig:PW-BCP}, showing how server 1 switches between the basic activity 1 (serving buffer 1) and the nonbasic activity 5 (serving buffer 3) as a function of $Q_1$ and $Q_3$. This behavior is described by a switching curve: the solid and dashed blue curves represent the switching curves for the MDP and BCP policies, respectively; when the queue-length process lies above the switching curve, server 1 serves buffer 3, and otherwise, it serves buffer 1. The figure's background shading shows the relative amount of time the system spends in each state when initially empty and following the MDP policy, with darker colors indicating higher state frequencies.

\begin{figure}
    \centering
    \includegraphics[width=0.7\linewidth]{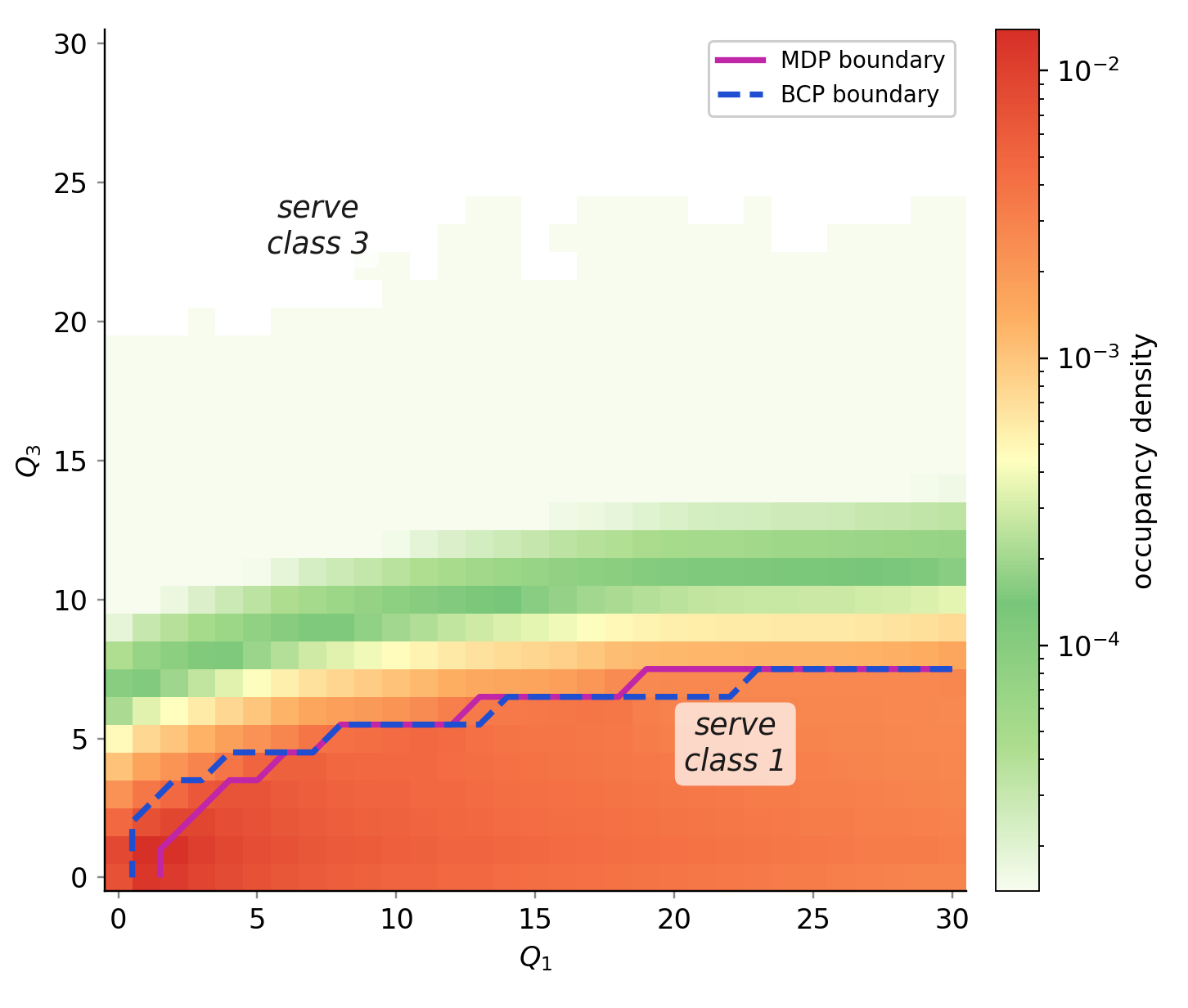}
    \caption{MDP (purple, solid) and BCP (blue, dashed) scheduling boundaries on the slice $Q_2=0$, given the holding cost vector $\tilde{h} = (1,2,3)$. 
    Background shading is the MDP-policy state-occupancy density conditioned on $Q_2=0$ (logarithmic color scale).}
    \label{fig:PW-BCP}
\end{figure}
 
Table \ref{tab:Pesic-Williams} reports the simulated cost performance, with standard errors, of three policies for the Pesic-Williams discrete-flow model, again assuming the holding cost vector $\tilde{h} = (1,2,3)$: they are the greedy heuristic policy (see Appendix \ref{app:computation}), the optimal MDP policy, and the BCP policy proposed in Section \ref{sec:policy}. The BCP policy's objective value is very close to that of the optimal MDP policy, within a fraction of one percent, and it is significantly below that of the greedy heuristic.

\begin{table}[!ht] \caption{Simulated cost performance of three policies for the Pesic-Williams parallel-server example.} \label{tab:Pesic-Williams} \centering \begin{tabular}{@{}lr@{}} \toprule \textbf{policy} & \textbf{cost} \\ \midrule MDP & $2581.2 \pm 2.2$ \\ greedy heuristic & $3277.1 \pm 3.9$ \\ BCP & $2586.2 \pm 3.0$ \\ \bottomrule \end{tabular} \end{table}

\subsection{Three-station example}
\label{sec:three-station}

Lastly, we consider the 3-station example introduced in Section \ref{sec:InputControl}. Because solving the MDP is computationally infeasible for this example, Table \ref{tab:three-station} compares the BCP policy proposed in Section \ref{sec:policy} with the greedy heuristic policy that is described and motivated in Appendix \ref{app:computation}. The
greedy policy’s cost is approximately $9\%$ higher than that of the BCP policy.

\begin{table}[!ht] \caption{Simulated cost performance of two policies for the three-station example.} \label{tab:three-station} \centering \begin{tabular}{@{}lr@{}} \toprule \textbf{policy} & \textbf{cost} \\ \midrule greedy heuristic & $8844.8 \pm 10.4$ \\ BCP & $8117.4 \pm 10.1$ \\ \bottomrule \end{tabular} \end{table}

\section{Conjectures regarding two kinds of convergence}
\label{sec:Discussion}

\noindent First, let $V(z)$ denote the minimum cost achievable in our initial BCP (Section \ref{sec:raw}), given that $Z(0)=z$, calling $V(\cdot)$ the \textit{optimal value function} for that problem. Also, let $V_a(\cdot)$ be defined similarly, but for the modified BCP (Section \ref{sec:drift1}), assuming for ease of exposition that $a_1=\ldots=a_n=a>0$. Because every admissible control for the modified BCP is also admissible for the original BCP, it is obvious that $V_a(\cdot) \geq V(\cdot)$. Now we argue that, furthermore,
\begin{equation}
   V_a(\cdot) \downarrow V(\cdot) \text{\quad as \quad} a \uparrow \infty.
\label{eq:converge}
\end{equation}

For an informal justification of this assertion, let
\begin{equation}
   \Delta := \{\delta \in \mathbb{R}^m : \delta = Ry, y \in \mathbb{R}^n, Ky \geq 0\}.
\label{eq:feasible_directions}
\end{equation}
The $m$-vectors $\delta \in \Delta$ are the \textit{feasible directions of control} in our initial BCP: starting from an arbitrary state $z \in \mathbb{R}^m_+$ at any given time, the system manager in the initial BCP can enforce an instantaneous displacement to another state $z+\delta \in \mathbb{R}^m_+$ if and only if $\delta \in \Delta$. The system manager in the \textit{modified} BCP can similarly enforce displacement in any such direction, but only at a \textit{finite rate}. As the bound on that rate is increased toward infinity, the argument goes, the distinction between the two problems becomes insignificant.

What that argument may seem to ignore is the following: our formulation of the modified BCP demands that the system manager effect displacements in direction $RQ^i$ whenever the boundary $Z_i=0$ is struck ($i=1,\ldots,m$), and the matrix $Q$ has not been ``optimized" in any sense. But if the upper bound parameter $a$ is large, then by exercising displacement control at the maximum allowable rate when $Z$ is \textit{near} the boundary, the system manager can produce behavior that closely approximates reflection at the boundary in any desired direction. That is, as $a \uparrow \infty$, the system manager can effectively ``overpower" the pre-specified policy for action at the boundary. Of course, the same reasoning applies if the upper bound parameters $a_1,\ldots,a_n$ are different but are scaled up by a common factor that $\uparrow \infty$.
\ 

Our second conjecture concerns a prelimit analog of the modified BCP. Recall from (\ref{eq:more_bounds}) that, to faithfully represent our original problem context, the drift-rate bounds $a_1,\ldots,a_n$ should be of order $\sqrt{r}$ in the $r^{th}$ system. With that scaling in mind, consider the $r^{th}$ network control problem with controls restricted to the scaled analog of (\ref{eq:formofY}), that is,
\begin{equation*}
d \tilde{Y}(t) = \tilde \theta(t) \, dt + Q \, d\tilde{L}(t),
\end{equation*}
where the drift-rate process $\tilde \theta(\cdot)$ is bounded by parameters of order $\sqrt{r}$, and $\tilde{L}_i(\cdot)$ increases only when buffer $i$ is empty ($i=1,\ldots,m$). Substituting this expression for $\tilde{Y}(\cdot)$ into the system equation (\ref{eq:centered_scaled_system}), while retaining the constraints (\ref{eq:Monotonicity}) and the objective (\ref{eq:scaledobjective}), one obtains what may be viewed as a prelimit formulation of the modified BCP. Letting $V^r(0)$ denote the optimal objective value for that formulation when the system is initially empty, we conjecture that $V^r(0) \rightarrow V(0)$ as $r \rightarrow \infty$. Taken together, the two conjectures support our computational approach from complementary directions: the first one says that the modified BCP recovers the initial BCP as the drift-rate bounds grow large, and the second one says that, with those bounds scaled in proportion to $\sqrt{r}$, the modified BCP correctly captures the behavior of the network control problem of original interest in the heavy traffic regime.

\bibliographystyle{plainnat}
\bibliography{mybib}
\begin{appendix}

\section{Nominal activity rates and the heavy traffic parameter regime}
\label{Nominal}
The notion of ``heavy traffic" has been developed for dynamic control problems, as opposed to purely descriptive queueing models, in \citet{harrison2000brownian}  and \citet{harrison2003broader}.  In this appendix, we briefly paraphrase that development in terms that are appropriate for analysis of a single given system; in contrast, most of the literature on heavy traffic approximations has been focused on formal limit theory to justify the use of diffusion-based models in an asymptotic parameter regime.

As in Section 2, let $\lambda$ be the $m$-vector of exogenous arrival rates (or exogenous demand rates), $R$ the $m \times n$ input-output matrix, and $A$ the $p \times n$ capacity consumption matrix for a network control problem. Now consider the following \textit{static planning problem}:
\begin{equation}
\text{Choose  } \rho \text{ and } x \geq 0  \text{  to minimize } \rho \text{  subject to  }  Rx=\lambda, \, \text{ and } Ax \leq \rho e. 
\label{eq:SPP}
\end{equation}
Here the decision variables $\rho$ and $x$ are scalar and $n$-dimensional, respectively, and $e$ is the $p$-vector of ones. For an interpretation of this linear programming problem, one considers a deterministic fluid analog of our original network model, with external inputs, activity-generated outputs, and capacity consumption all occurring continuously at rates dictated by the original model data. One interprets $x$ as the constant vector of activity levels employed by a system manager, and $\rho$ as the maximum level of capacity utilization implied by that choice of $x$.

Assuming for simplicity that \eqref{eq:SPP} has a unique optimal solution, we denote by $\rho^*(\lambda)$ and $x^*(\lambda)$ those optimal values, thus emphasizing the pivotal role of the exogenous input rates. The standard or default interpretation of the phrase ``heavy traffic parameter regime" is embodied in the assumption immediately below; another, more general interpretation of that phrase will be needed later in this appendix for treatment of our three-station example.
\newline
\newline
\noindent \textbf{Standard Heavy Traffic Assumption.} There exists an $m$-vector $\lambda^* \geq 0$ that is close to $\lambda$ (in the usual Euclidean metric) and such that $\rho^*(\lambda^*)=1$ and $Ax^*(\lambda^*)=e$.
\newline
\newline
We then take the vector $\beta$ of nominal activity rates (see Section \ref{sec:Centering}) to be
\begin{equation}
\beta=x^*(\lambda^*).
\end{equation}
\label{eq:ChooseBeta}
\indent In words, our Standard Heavy Traffic Assumption says the following: if the vector of exogenous input rates is changed slightly from $\lambda$ to $\lambda^*$, then every server will have to work at full capacity (that is, $Ax^*(\lambda^*)=e$) to avoid inventory build-ups over time. This situation, where a small adjustment in external arrival rates causes \textit{all} servers to be “critically loaded,” may seem overly restrictive, but the assumption really means that lightly loaded servers, if any such exist, have simply been deleted from our model. That is, the model under discussion
provides explicit representation of just the “bottleneck sub-network” composed of $p$ heavily loaded servers.
\newline
\newline
\noindent \textbf{Criss-Cross example.} In this simple network model, the system manager has no discretion as to which server processes work from which buffer, and there is no input control capability. Thus, combining the first-order data \eqref{eq:criss-crossData} with the routing information conveyed by Figure \ref{fig:criss-cross}, we calculate a \textit{load factor} of $\lambda_1/\mu_1+\lambda_2/\mu_2=0.975$ for server 1 and $\lambda_2/\mu_3=0.95$ for server 2. One sees that a small increase in $\lambda_2$ from $0.95$ to $1$ will push both load factors up to $1$, so we are led to choose
\begin{equation}
\lambda^* = (1,1,0) \quad \text{and} \quad \beta=x^*(\lambda^*)=(0.5, 0.5,1).
\label{eq:criss-crossLoads}
\end{equation}

\noindent \textbf{The Pesic-Williams parallel-server example.} Given the routing structure pictured in Figure \ref{fig:pesic} and the parameter values specified in \eqref{eqn-pesic-williams-arrival-service-rates}, one sees that our Standard Heavy Traffic Assumption can be satisfied by choosing the nearby vector of exogenous arrival rates $\lambda^*=(2,1,1)$, which dictates the following vector of nominal activity rates:
\begin{equation}
\beta=x^*(\lambda^*)=(1, 0.5,0.5,1,0).
\label{eq:PWLoads}
\end{equation}
That is, the nominal processing plan calls for server 1 to dedicate all its capacity to buffer 1, server 3 to devote all its capacity to buffer 3, and server 2 to share its capacity equally between buffers 1 and 2. A notable feature of this plan is that activity 5 (server 1 processing jobs from buffer 3) is not used; it is a nonbasic activity. 
\newline
\newline
\noindent \textbf{Three-station example.} In this more complex model (see Subsection \ref{sec:InputControl}), the system manager can ``turn off" any of the external arrival processes, but doing so is costly. To model this situation, we have set all three exogenous arrival rates to zero ($\lambda=0$) as a matter of convention, and created four ``input activities" that are conducted by three fictional, capacity-constrained ``input servers." 

With that model structure, the penalties associated with rejecting arrivals are represented as ``idleness costs" for our fictional input servers. Of course, the system manager's objective is \textit{not} simply one of handling an exogenously imposed load subject to capacity constraints, but rather one of minimizing costs associated with available activities. \citet{harrison2003broader} has developed a general framework for analysis of such problems, but in the interest of brevity, the associated general theory will not be recapitulated here. Rather, what follows is a minimal treatment of the three-station example that is our immediate concern.

For the class of models considered in this paper, the general framework referred to above involves the following alternative to the standard static planning problem \eqref{eq:SPP}: 

\begin{equation}
\text{Choose  } x \geq 0  \text{  to minimize } c \cdot u \text{  subject to  }  Rx=\lambda, \, u=e-Ax \text{ and } u \geq 0. 
\label{eq:SPP2}
\end{equation}
Here $c$ is the $p$-vector of cost rates associated with server idleness (including idleness of the fictional input servers), and $e$ is the $p$-vector of ones, while $R$ and $A$ have the dimensions and interpretation cited earlier in this appendix; see Equations (\ref{eqn:Rvalues-3-station}) and (\ref{eqn:Avalues-3-station}) for the $R$ and $A$ matrices, respectively, for the three station example. For our three-station example, where $\lambda=0$ as a modeling convention, the constraint $Rx=\lambda$ means that the eight true processing activities (that is, the processing of units in the eight buffers by the associated servers) are conducted at levels that precisely balance the rates chosen for the four input activities. Components of $u$ represent server idleness rates, and only the last three components of the six-vector $c$ (that is, the idleness cost rates for the three input servers) are positive. For our three-station example, there is a unique optimal solution $x^*$ for the static planning problem \eqref{eq:SPP2}, which we adopt as our nominal processing plan $\beta$. To be specific, that solution is
\begin{equation}
\beta=x^*=(0.5,0.75,0.25,0.25,0.25,0.25,0.5,0.25,0.5,0.5,1,1).
\label{eq:three-station-nominal-plan}
\end{equation}

The last four components of $\beta$ specify that, under the nominal processing plan, type A input is divided equally between buffers 1 and 3, with all of that input accepted, and that all input of types B and C is accepted as well; the first eight components specify that each of the three original servers (as opposed to fictional input servers) uses all of its available capacity, dividing that capacity among its client buffers in proportions that enable a flow rate of 0.25 through each buffer. Thus, in the idealized fluid model underlying the static planning problem \eqref{eq:SPP2}, no input control costs would ever be incurred, nor would there be any inventory holding costs, since all buffer contents would remain precisely at zero.

\section{Additional calculations under standard assumptions}
\label{Additional}

The additional ``standard" assumptions listed in Section \ref{sec:general}  are threefold: each exogenous input process is a renewal process; service times for each activity form an i.i.d. sequence; and jobs change class in Markovian fashion after completing service. Under these assumptions, one has the following rather obvious formula for the $m \times n$ matrix $R$ of first-order data in (\ref{eq:first_order}): 

\begin{equation}
   R_{ij} =
\begin{cases} 
\text{      }\mu_j & \text{if  } i = b(j), \\
-\mu_j  P_{ij} & \text{otherwise},
\end{cases}
\label{eq:redefineR}
\end{equation}
where $\mu_j$ is the mean service rate (that is, the reciprocal of the mean service time) for activity $j$, and $P_{ij}$ is the probability that a job of class $b(j)$, after it completes a type $j$ service, will next become a class $i$ job.

Next, because the exogenous input processes are independent renewal processes, a standard result in renewal theory gives the following formula for the covariance matrix $\Gamma^0$ in \eqref{eq:second_order}:
\begin{equation}
 \Gamma^0=\text{diag }( (\lambda_1^*)^3\eta_1^2,\ldots,(\lambda_m^*)^3\eta_m^2),
\label{eq:Gamma0}
\end{equation}
where $\lambda_i^*$ is the average exogenous arrival rate into class $i$, and $\eta_i^2$ is the variance of inter-arrival times for that class. For the important special case where the exogenous input processes are independent \textit{Poisson} processes, one has $\eta_i=1/\lambda_i^*$ for each $i=1,\ldots,m$, so \eqref{eq:Gamma0} specializes to 
\begin{equation}
\Gamma^0=\text{diag}(\lambda_1^*,\ldots,\lambda_m^*) \quad \text{for the Poisson case}.
\label{eq:PoissonCase}
\end{equation}

Finally, by generalizing in an obvious way the calculation done in Section 4 of \cite{harrison1988brownian}, one obtains the following formula for the covariance matrices $\Gamma^1,\ldots,\Gamma^m$ in \eqref{eq:DefineX}:
\begin{equation}
 \Gamma^j=\mu_j\Omega^j+\mu_j\sigma_j^2R^j(R^j)' \quad \text{for} \, j=1,\ldots,m.
\label{eq:Covariance}
\end{equation}
Here $R^j$ denotes the $j^{th}$ column of $R$ as usual, and $\Omega^j$ is an $m \times m$ covariance matrix with the following elements (using the notation $\delta_{ij}=1$ if $i=j$ and $=0$ otherwise):
\begin{equation}
\Omega^j_{ik}=P_{ij}(\delta_{ik}-P_{kj}) \quad \text{for} \quad i,k=1,\ldots,m \quad \text{and} \quad j=1,\ldots,n.
\label{DefineOmega}
\end{equation}
Readers wishing to check formula \eqref{DefineOmega} must pay careful attention to differences in model formulation and notation used in this paper versus \cite{harrison1988brownian}: our model here is more general, with $n \geq m$ rather than $n=m$, and the quantity denoted $P_{ij}$ in \cite{harrison1988brownian} is here denoted $P_{ji}$.

\section{Verifying that the matrix $H$ is completely-S}
\label{Verifying}

In this appendix we verify, for each of our three examples, that the matrix $H=RQ$ specified in Section \ref{sec:drift2} is completely-$S$ for every $\kappa\in(0,1)$. In each case we prove the stronger statement that $H$ is a \textit{$P$-matrix}, meaning that all of its principal minors are strictly positive. The desired conclusion then follows immediately: every principal submatrix of a $P$-matrix is again a $P$-matrix, and every $P$-matrix is an $S$-matrix, so every $P$-matrix is completely-$S$. Throughout this appendix, $H[\alpha]$ denotes the principal submatrix of $H$ obtained by retaining the rows and columns indexed by a set $\alpha$, while deleting others.
\newline
\newline
\noindent \textbf{Criss-cross example.} The matrix $H$ for this example, displayed in \eqref{eq:defineH-matrix}, is $3\times 3$, so there are $2^3-1=7$ principal minors to check. The order-1 principal minors are the diagonal elements of $H$, each of which equals 1. For the order-2 principal minors, direct calculation gives
\[
\det H[\{1,2\}] = 1-\kappa^2, \qquad
\det H[\{1,3\}] = 1, \qquad
\det H[\{2,3\}] = 1.
\]
Finally, because the only nonzero element in the third column of $H$ is $H_{33}=1$, expanding the determinant along that column gives
\[
\det H = \det H[\{1,2\}] = 1-\kappa^2.
\]
Thus all seven principal minors are strictly positive for $\kappa\in(0,1)$, so $H$ is a $P$-matrix and hence is completely-$S$. (If one takes $\kappa=1$, then the first two rows of $H$ are negatives of one another, so there exists no $x\geq 0$ satisfying $Hx>0$; that is, $H$ is not an $S$-matrix in that case.)
\newline
\newline
\noindent \textbf{Pesic-Williams parallel-server example.} The matrix $H$ for this example, displayed in \eqref{eq:HforPW}, is again $3\times 3$. Its order-1 principal minors are the diagonal elements $H_{11}=2$, $H_{22}=1$ and $H_{33}=1$, and its order-2 principal minors are
\[
\det H[\{1,2\}] = 2-\kappa^2, \qquad
\det H[\{1,3\}] = 2, \qquad
\det H[\{2,3\}] = 1.
\]
As in the previous example, the only nonzero element in the third column of $H$ is $H_{33}=1$, so
\[
\det H = \det H[\{1,2\}] = 2-\kappa^2.
\]
Again, all seven principal minors are strictly positive for $\kappa\in(0,1)$, so $H$ is a $P$-matrix and hence is completely-$S$.
\newline
\newline
\noindent \textbf{Three-station example.} The matrix $H$ for this example, displayed in \eqref{eq:ThirdH}, is $8\times 8$, so there are $2^8-1=255$ principal minors to check; we have verified all of them using exact rational arithmetic. Every principal minor of $H$ factors into a positive rational constant multiplied by a product of powers of the five linear factors
\[
(1-\kappa),\qquad
(1+\kappa),\qquad
(3-\kappa),\qquad
(3+\kappa),\qquad
(3-2\kappa).
\]
Since all five factors are strictly positive for $\kappa\in(0,1)$,
every principal minor is strictly positive. Therefore, $H$ is a
$P$-matrix and hence is completely-$S$.

For illustration, several representative principal minors are given below:
\[
\begin{array}{ccl}
\hline
\text{Order} & \text{Index set } \alpha & \det H[\alpha]\\
\hline
1
& \{i\}\text{ for any }i
& \dfrac14
\\[1.5ex]

2
& \{1,7\}
& \dfrac{(1-\kappa)(1+\kappa)}{16}
\\[1.5ex]

2
& \{2,5\}
& \dfrac{(1-\kappa)(1+\kappa)}{16}
\\[1.5ex]

2
& \{3,4\}
& \dfrac{(3-\kappa)(3+\kappa)}{144}
\\[1.5ex]

3
& \{3,4,6\}
& \dfrac{(3+\kappa)^2(3-2\kappa)}{1728}
\\[1.5ex]

4
& \{3,4,6,8\}
& \dfrac{(1-\kappa)(3+\kappa)^3}{6912}
\\[1.5ex]

4
& \{1,2,5,7\}
& \dfrac{(1-\kappa)^2(1+\kappa)^2}{256}
\\[1.5ex]

6
& \{1,2,3,4,5,7\}
& \dfrac{(3-\kappa)(1-\kappa)^2
              (1+\kappa)^2(3+\kappa)}
             {36864}
\\[1.5ex]

7
& \{1,2,3,4,5,6,7\}
& \dfrac{(1-\kappa)^2(1+\kappa)^2
              (3+\kappa)^2(3-2\kappa)}
             {442368}
\\[1.5ex]

8
& \{1,2,3,4,5,6,7,8\}
& \dfrac{(1-\kappa)^3(1+\kappa)^2
              (3+\kappa)^3}
             {1769472}
\\
\hline
\end{array}
\]

To summarize, the matrix $H=RQ$ is a $P$-matrix for each of our three examples and for every $\kappa\in(0,1)$, and therefore $H$ is completely-$S$ in all three cases, as claimed in Section \ref{sec:drift2}.
\ 
\newline

\noindent \textbf{An alternative argument.} We conclude this appendix with a second, unified proof that H is completely-S in all three examples. Recall that A is a \textit{nonsingular M-matrix} if $A=sI-N$, where $N\geq 0$ elementwise and $s>\rho(N)$, with $\rho(\cdot)$ denoting the spectral radius. Such a matrix has a nonnegative inverse, and each of its principal submatrices is itself a nonsingular M-matrix. Below, $\mathbf{1}$ denotes the vector of all ones.
\begin{lemma}
\label{lem:MMproduct}
If $A$ and $B$ are nonsingular $M$-matrices of the same size, then $AB$ is completely-$S$.
\end{lemma}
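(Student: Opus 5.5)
The idea is to verify the $S$-property for every principal submatrix directly, by building an explicit nonnegative test vector out of inverses of principal submatrices of $A$ and $B$. The complication is that a principal submatrix of $AB$ is \emph{not} the product of the corresponding principal submatrices. Fix a nonempty index set $\alpha$ and write $\bar\alpha$ for its complement. Then
\[
(AB)[\alpha] = A[\alpha,\alpha]B[\alpha,\alpha] + A[\alpha,\bar\alpha]B[\bar\alpha,\alpha].
\]
The cross term has to be controlled. I will handle it using only the sign pattern of $M$-matrices: a nonsingular $M$-matrix $A=sI-N$ has nonpositive off-diagonal entries. In particular the off-diagonal blocks $A[\alpha,\bar\alpha]$ and $B[\bar\alpha,\alpha]$ are $\leq 0$ elementwise. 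I will also use the facts recalled just before the lemma: each principal submatrix $A[\alpha]$, $B[\alpha]$ is again a nonsingular $M$-matrix, and hence has a nonnegative inverse.

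\textbf{Step 1 (choose the vector).} Set $v := A[\alpha]^{-1}\mathbf{1}$. This $v$ is nonnegative. It is in fact strictly positive, because $A[\alpha]^{-1}$ is nonnegative and nonsingular, so none of its rows is zero. Next set $x := B[\alpha]^{-1}v$, which satisfies $x\geq 0$ because $B[\alpha]^{-1}\geq 0$ and $v\geq 0$.

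\textbf{Step 2 (compute $w := B[\,\cdot\,,\alpha]x$).} This is the full column block of $B$ applied to $x$. Its $\alpha$-part is $w_\alpha = B[\alpha]x = v > 0$. Its $\bar\alpha$-part is $w_{\bar\alpha} = B[\bar\alpha,\alpha]x \leq 0$, since a nonpositive matrix is applied to a nonnegative vector.

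\textbf{Step 3 (apply $A$ and read off the $\alpha$-rows).} We have
\[
(AB)[\alpha]\,x = A[\alpha]w_\alpha + A[\alpha,\bar\alpha]w_{\bar\alpha}.
\]
The second term is a product of two nonpositive objects, so it is $\geq 0$. The first term equals $A[\alpha]v=\mathbf{1}$. Hence $(AB)[\alpha]x \geq \mathbf{1} > 0$ with $x\geq 0$. So $(AB)[\alpha]$ is an $S$-matrix, and since $\alpha$ was arbitrary, $AB$ is completely-$S$. The case $\alpha$ equal to the full index set is included; there the cross term is simply absent.

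The step I expect to need the most care is the sign bookkeeping in Steps 2 and 3. Nonnegativity of the inverses alone does not suffice. What makes the cross term harmless is exactly the $Z$-matrix sign structure of both off-diagonal blocks. I would also double-check that strict positivity of $v$ is not actually needed: the final inequality is $\geq\mathbf{1}$ regardless, so $v\geq 0$ would do.
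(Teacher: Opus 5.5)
Your proof is correct and is essentially the paper's argument. You use the same test vector $x=B[\alpha]^{-1}A[\alpha]^{-1}\mathbf{1}$, and your cross-term sign argument (nonpositive off-diagonal blocks $A[\alpha,\bar\alpha]$ and $B[\bar\alpha,\alpha]$) is simply an inline proof of the inequality $(AB)[\alpha]\geq A[\alpha]B[\alpha]$, which the paper instead cites as Lemma~3.3 of \citet{grundy2007products}.
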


\begin{proof}
Fix a nonempty index set $\alpha$. By Lemma~3.3 of \citet{grundy2007products}, $(AB)[\alpha] \geq A[\alpha] B[\alpha]$ elementwise. Moreover,
\[
x:=\left(A[\alpha]B[\alpha]\right)^{-1}\mathbf{1}
  =B[\alpha]^{-1}A[\alpha]^{-1}\mathbf{1}>0,
\]
because $A[\alpha]$ and $B[\alpha]$ are nonsingular $M$-matrices. Therefore $(AB)[\alpha] x \geq A[\alpha] B[\alpha] x = \mathbf{1}>0$, so every principal submatrix of $AB$ is an $S$-matrix.
\end{proof}

In each of our three examples, the elements of $H$ depend affinely on $\kappa$, and direct computation verifies the factorization
\begin{equation}
H = H^0 \left( I-\kappa \Phi \right),
\label{eq:MMfactorization}
\end{equation}
where $H^0$ denotes the matrix obtained from $H$ by setting $\kappa=0$, and $\Phi:=\kappa^{-1}(H^0)^{-1}(H^0-H)$ does not involve $\kappa$. For the criss-cross example \eqref{eq:defineH-matrix}, one has
\[
H^0=
  \left[ {\begin{array} {rrr}
   1 & 0 & 0 \\
   0 & 1 & 0 \\
   0 & -1 & 1 \\
  \end{array} } \right],
\qquad
\Phi=
  \left[ {\begin{array} {rrr}
   0 & 1 & 0 \\
   1 & 0 & 0 \\
   0 & 0 & 0 \\
  \end{array} } \right].
\]
For the Pesic-Williams parallel-server example \eqref{eq:HforPW}, one has
\[
H^0=
  \left[ {\begin{array} {rrr}
   2 & 0 & 0 \\
   0 & 1 & 0 \\
   0 & 0 & 1 \\
  \end{array} } \right],
\qquad
\Phi=
  \left[ {\begin{array} {rrr}
   0 & 1/2 & 0 \\
   1 & 0 & 0 \\
   1 & 0 & 0 \\
  \end{array} } \right].
\]
Finally, for the three-station example \eqref{eq:ThirdH}, one has
\[
H^0=\frac{1}{4}
  \left[ {\begin{array} {rrrrrrrr}
   1 &  &  &  &  &  &  &  \\
   -1 & 1 &  &  &  &  &  &  \\
    &  & 1 &  &  &  &  &  \\
    &  &  & 1 &  &  &  &  \\
    &  &  & -1 & 1 &  &  &  \\
    &  &  &  &  & 1 &  &  \\
    &  &  &  &  & -1 & 1 &  \\
    &  &  &  &  &  & -1 & 1 \\
  \end{array} } \right],
\qquad
\Phi=
  \left[ {\begin{array} {rrrrrrrr}
    &  &  &  &  &  & 1 &  \\
    &  &  &  & 1/3 &  &  &  \\
    &  &  & 1/3 &  & 1/3 &  & 1/3 \\
    &  & 1/3 &  &  & 1/3 &  & 1/3 \\
    & 3 &  &  &  &  &  &  \\
    &  & 1/3 & 1/3 &  &  &  & 1/3 \\
   1 &  &  &  &  &  &  &  \\
    &  & 1/3 & 1/3 &  & 1/3 &  &  \\
  \end{array} } \right],
\]
where blank entries are zeros.

In each case $H^0$ is lower triangular with strictly positive diagonal elements and non-positive off-diagonal elements, hence a nonsingular $M$-matrix, while $\Phi \geq 0$ and direct calculation gives $\rho(\Phi) \leq 1$. Thus $\rho(\kappa\Phi)=\kappa\rho(\Phi) \leq \kappa <1$ for $\kappa \in (0,1)$, so $I-\kappa\Phi$ is a nonsingular $M$-matrix as well, and $H$ is completely-$S$ by Lemma \ref{lem:MMproduct}.

\section{Implementation details of our computational method}
\label{app:computation}

This appendix provides details of the implementation of the numerical results presented in Section~\ref{sec:Tests}. Recall that we consider the criss-cross network example, the Pesic-Williams example, and the 3-station network, all of which were introduced in Section~\ref{sec:general} and studied throughout the paper. 

\paragraph{Reference process.}
The data for neural network training are generated using the following reference process:
$$\hat{Z}(t) = X(t) + \mu_{\rm ref} t + HL(t),~ t \geq 0,$$
where \(\mu_{\rm ref}\) is a constant drift-rate vector. We set each coordinate of \(\mu_{\rm ref}\) to -0.5, -1.0, and -0.15 for the criss-cross network example, the Pesic-Williams example, and the 3-station example, respectively. Thus, the reference process is an RBM in the orthant. It differs from the controlled state process in Equation \eqref{eq:BCP1new} only through its drift rate: the controlled process uses $\zeta+R\theta(\cdot)$, whereas the reference process uses the constant vector \( \mu_{\rm ref}\). 

As shown in Table \ref{tab:hparams}, we set the minibatch size to 256; that is, we simulate 256 discretized trajectories of $\hat{Z}(\cdot)$ in parallel using an Euler-Maruyama scheme. Each trajectory is partitioned into 6,000 consecutive segments of duration $T=0.01$, corresponding to $N=64$ time steps in the Euler-Maruyama scheme. We discard the first 1,000 segments of each trajectory as warm-up and retain the remaining 5,000 segments. This yields a pool of 5,000 minibatches, each of size 256, which we cycle through during training as needed. In particular, no new trajectories are generated during training.

\paragraph{Neural network training.}
Each policy is trained on a single H100 GPU. The value function and its gradient are approximated by two feedforward networks, \(V_\eta:\mathbb{R}^m\to\mathbb{R}\) and \(G_\phi:\mathbb{R}^m\to\mathbb{R}^m\), with ELU activations and a linear output layer. 
We add a penalty term to the loss function to encourage the gradient-function estimate $G_{\phi}$ to be nonnegative. This, in turn, ensures that the value-function estimate $V_{\eta}$ is nondecreasing in each of its input variables. Specifically, for each minibatch, we add the average of the penalty
$$\Lambda \sum_{i=1}^m \max\{-G_{\phi}^i(z),0\},$$
where $G_{\phi}^i$ denotes the $i$th component of the gradient estimate and $\Lambda$ is a penalty parameter to be tuned. During training, we gradually increase the parameter that bounds the drift-rate control as a function of the iteration index $t$, as follows:
\[
  b_{\mathrm{eff}}(t)=\min\!\Bigl\{\,b,\; b_0+\frac{t}{40\,\log_2(1+m)\,\mathrm{pace}}\,\Bigr\}.
\]
We start at \(b_0=0\) and reach the target \(b\) at step \(40\,\log_2(1+m)\,\mathrm{pace}\cdot b\) (\(8000\) for \(m=3\), about \(12{,}700\) for \(m=8\)). The learning rate schedule follows a standard cosine decay over training. The remaining hyperparameters are shown in Table~\ref{tab:hparams}; they are shared across the three examples, differing only in the network depth and the number of training updates.

\begin{table}[!ht]
\caption{Hyperparameters for neural network training, policy-evaluation simulations, and the MDP benchmark.}
\label{tab:hparams}
\centering
\small
\begin{tabular}{@{}lccc@{}}
\toprule
 & Criss-cross & Pesic-Williams & Three-station \\
 \midrule
 \multicolumn{4}{@{}l}{\textit{Reference process sampling}}\\
 \quad reference drift (per coordinate) & \(-0.5\) & \(-1.0\) & \(-0.15\) \\
 \quad segment horizon \(T\) & \(0.01\) & \(0.01\) & \(0.01\) \\
 \quad time steps per segment \(N\) & \(64\) & \(64\) & \(64\) \\
 \quad step size \(\Delta t=T/N\) & \(1.5625{\times}10^{-4}\) & \(1.5625{\times}10^{-4}\) & \(1.5625{\times}10^{-4}\) \\
 \quad parallel trajectories (\(=\) minibatch size) & \(256\) & \(256\) & \(256\) \\
 \quad segments per trajectory (warm-up \(+\) retained) & \(1000{+}5000\) & \(1000{+}5000\) & \(1000{+}5000\) \\
 \midrule
\multicolumn{4}{@{}l}{\textit{Neural network architecture}}\\
\quad hidden layers \(\times\) width & \(3\times100\) & \(3\times100\) & \(4\times100\) \\
\quad activation & ELU & ELU & ELU \\
\quad initialization & He & He & He \\
\midrule
\multicolumn{4}{@{}l}{\textit{Neural network optimization}}\\
\quad optimizer & AdamW & AdamW & AdamW \\
\quad base learning rate \(\eta_0\) & \(10^{-3}\) & \(10^{-3}\) & \(10^{-3}\) \\
\quad weight decay & \(10^{-3}\) & \(10^{-3}\) & \(10^{-3}\) \\
\quad gradient-norm clip & \(10\) & \(10\) & \(10\) \\
\quad nonnegative gradient penalty \(\Lambda\) & \(1.0\) & \(1.0\) & \(1.0\) \\
\quad training updates & \(48{,}000\) & \(48{,}000\) & \(128{,}000\) \\
\midrule
\multicolumn{4}{@{}l}{\textit{Training control-bound curriculum}}\\
\quad target bound \(b\) & \(10\) & \(10\) & \(10\) \\
\quad initial bound \(b_0\) & \(0\) & \(0\) & \(0\) \\
\quad pace & \(10\) & \(10\) & \(10\) \\
\midrule
\multicolumn{4}{@{}l}{\textit{Brownian value simulation}}\\
\quad number of paths & \(100{,}000\) & \(100{,}000\) & \(100{,}000\) \\
\quad time horizon & \(3.0\) & \(3.0\) & \(3.0\) \\
\quad step size & \(1.5625{\times}10^{-4}\) & \(1.5625{\times}10^{-4}\) & \(1.5625{\times}10^{-4}\) \\
\quad control bound \(b\) & \(10\) & \(10\) & \(10\) \\
\midrule
\multicolumn{4}{@{}l}{\textit{Queueing-network simulation}}\\
\quad number of paths & \(100{,}000\) & \(100{,}000\) & \(100{,}000\) \\
\quad time horizon (\(=r\cdot 3.0\)) & \(1200\) & \(1200\) & \(1200\) \\
\midrule
\multicolumn{4}{@{}l}{\textit{MDP benchmark (value iteration)}}\\
\quad truncation level \(M\) & \(300\) & \(300\) & --- \\
\quad stopping tolerance \(\varepsilon\) (max norm) & \(10^{-4}\) & \(10^{-4}\) & --- \\
\bottomrule
\end{tabular}
\end{table}

\paragraph{Queueing-network simulations.}
The costs reported in Section~\ref{sec:Tests} are estimated by simulating the continuous-time Markov chain associated with the queueing network under the deployed policy, with the simulation parameters listed in Table~\ref{tab:hparams}. 

\paragraph{Benchmark policies.}
As benchmark policies, we consider the optimal MDP policy whenever the MDP is computationally tractable, as well as a greedy heuristic. We also evaluated several other heuristic policies, but these were dominated by the greedy heuristic. Therefore, for brevity, we present only the greedy heuristic.
\newline
\textit{The greedy policy.} This policy builds on the maximum-pressure policy, cf. \citet{dailin2005maximum}, and uses only the current queue-length vector $q$, the holding-cost vector $h$, and the service-rate vector $\mu$. Crucially, each server makes its decisions locally in a decentralized manner. We next describe how the greedy heuristic makes sequencing, routing, and rejection decisions.

\begin{itemize}
  \item \emph{Sequencing decisions.}  Consider a processing activity $\ell$ that drains buffer $i$ at rate $\mu_\ell$. Under this activity, each completed job either proceeds to a downstream buffer, say buffer $j$, or leaves the network. In the former case, we define the pressure of the activity as $\mu_\ell(h_i q_i - h_j q_j)$. In the latter case, the pressure is $\mu_\ell h_i q_i$. Among processing activities with nonempty input buffers, the server selects the activity with the highest pressure and never idles as long as at least one input buffer is nonempty.
  \item \emph{Routing decisions.} This applies only to the $3$-station example. Under the greedy heuristic, a type-A arrival is routed to buffer 1 if
    \begin{equation*}
        h_1 q_1 < h_3 q_3
    \end{equation*}
    and to buffer 3 otherwise.
  \item \emph{Admission control decisions.} This also applies only to the $3$-station example. We let $r_i$ denote the penalty for rejecting a job arriving to buffer $i$ for $i\in \{1,3,4,6\}$. Thus, we have 
  $$
  r_1=r_3=\frac{c_4}{\lambda_A}, \quad r_4=\frac{c_5}{\lambda_B}, \quad r_6=\frac{c_6}{\lambda_C}.
  $$
  When the system manager has no routing decision to make, as is the case for jobs of types B and C, a job arriving to buffer $i$ is admitted if the buffer pressure is below the rejection penalty, that is, if
    \begin{equation*}
        h_i q_i < r_i, \quad i\in \{4,6\}.
    \end{equation*}
Otherwise, the job is rejected. When both routing and admission control decisions are required, as is the case for type A jobs, the system manager first makes the routing decision using the rule described above. Once the destination buffer has been selected, the admission control decision is made by comparing that buffer’s pressure with the corresponding rejection penalty. Thus, if the job is routed to buffer $j$, it is admitted if
    \begin{equation*}
        h_j q_j < r_j, \quad j\in \{1,3\}.
    \end{equation*}
    and rejected otherwise.

\end{itemize}
\ \newline
\emph{The optimal MDP policy.} Solving the MDP is computationally feasible only for the criss-cross and Pesic-Williams examples, after truncating the state space to $\{0,1,\ldots,M\}^3$. We compute the optimal policy via value iteration: with uniformization constant \(g=\rho+\sum_a\lambda_a+\sum_k\mu_k\), the Bellman update is
\[
  V_{t+1}(q)=\frac1g\Bigl[\,h^\top q+\sum_{a}\lambda_a\,V_t(q{+}e_a)+\min_{u}\sum_k \mu_k\,V_t\bigl(q-\Delta_k(u)\bigr)\Bigr],
\]
where the minimum is over admissible action profiles \(u\) and \(\Delta_k(u)\) is the queue change caused by server \(k\) under \(u\) (zero if \(k\) idles). For the Pesic-Williams example, we also impose the constraint that at most one server may work on each job at any given time. That is, we do not allow two servers to process the same job simultaneously.  Iterations stop when \(\max_{q}\abs{V_{t+1}(q)-V_t(q)}\) falls below the tolerance \(\varepsilon\). We report $V(0)$. The values of \(M\) and \(\varepsilon\) are given in Table~\ref{tab:hparams}.

\end{appendix}
\end{document}